\newtheorem{lemma}{Lemma}[section]
\newtheorem{theorem}{Theorem}[section]
\newtheorem{que}{Question}[section]
\newtheorem{defi}{Definition}[section]
\theoremstyle{definition}
\title{{\bf The maximum number of $s$-cliques in connected graphs and its application to spectral moment}
\thanks{Supported by the National Natural Science Foundation of China (Nos. 11971445 and 11871222), SRSF of Chuzhou University (No. 2018qd02).}}
\author{{\bf Longfei Fang}, {\bf Mingqing Zhai}\thanks{Corresponding author. E-mail addresses: 1053788649@qq.com (L. Fang); mqzhai@chzu.edu.cn (M.
Zhai); wuyuwuyou@126.com (B. Wang).}, {\bf Bing Wang}
\\
{\footnotesize  School of Mathematics and Finance, Chuzhou
University, Anhui, Chuzhou, 239012, China}}
\date{}
\begin{document}
\openup 1.0\jot \maketitle

\begin{abstract}
Extremal problems concerning the number of complete subgraphs have a long story in extremal graph theory. Let $k_s(G)$ be the number of $s$-cliques in a graph $G$ and $m={{r_m}\choose s}+t_m$, where $0\le t_m\leq r_m$.
Edr\H{o}s showed that $k_s(G)\le {{r_m}\choose s}+{{t_m}\choose{s-1}}$ over all graphs of size $m$ and order $n\geq r_m+1$.
It is natural to consider an improvement in connected situation: what is the maximum number of $s$-cliques over all connected graphs of size $m$ and order $n$? In this paper, the sharp upper bound of $k_s(G)$ is obtained and extremal graphs are completely characterized. The technique and the bound are different from those in general case. As an application, this result can be used to solve a question on spectral moment.

\bigskip
\noindent {\bf AMS Classification:} 05C35; 05C50

\noindent {\bf Keywords:} Clique; Maximum number; Connected graph; Spectral moment
\end{abstract}

\section{Introduction}

Graphs considered here are undirected, finite and simple. $V(G)$ and $E(G)$ are the vertex set and edge set of a graph $G$, respectively. For a vertex $u\in V(G) $, we denote the neighborhood of $u$ by $N_G(u)$ and the closed neighborhood of $u$ by $N_G[u]$. Let $G[S]$ denote the subgraph of $ G $ induced by a vertex subset $S$. For a vertex $v\in V(G)$, let $G-v $ denote the induced subgraph $G[V(G)\setminus \{v\}]$. Let $P_k$, $C_k$ $K_k$ be a path, a cycle and a complete graph of order $k$, respectively.  An \emph{$s$-clique} is a complete subgraph of order $s$ and the number of $s$-cliques in $G$ is denoted by $k_s(G)$.

The problem of determining $k_s(G)$ is a very interesting topic in extremal graph theory.
On one hand, a number of papers investigated the lower bound of $k_s(G)$ (see for example, \cite{bca,bcb,bcc,bcd,bcf}). On the other hand, much attentions have been paid to the upper bound of $k_s(G)$. The well-known Tur\'{a}n's theorem \cite{tur} obtained $k_2(G)$ in $K_{r+1}$-free graphs. Recently, Ergemlidze, Methuku, Salia, Gy\H{o}ri \cite{ERG} obtained $k_3(G)$ in a $C_5$-free graphs. Luo \cite{LUO}
bounded $k_s(G)$ in a graph without cycles of length more than $k$.
A conjecture due to Engbers and Galvin \cite{eg} asks which graph maximizes the number of $s$-cliques over all $n$-vertex graphs with maximum degree $\Delta$ for every $s\ge 3$. This conjecture has drawn attentions of many researchers (see \cite{faa,fcc,fdd}). Recently, Chase \cite{fee} completely solved this conjecture. When substitute the maximum degree with the average degree, Edr\H{o}s \cite{bce} considered the following question: what is the maximum number of $s$-cliques over all graphs with fixed size and order?

\begin{theorem}\label{001}(\cite{bce})
Let $m,n,s$ be positive integers with $s\geq3$ and ${{s}\choose 2}\le m\le {{n}\choose 2}$. Put $m={{r_m}\choose 2}+t_m$, where $0\le t_m\leq r_m$. Then $k_s(G)\leq {{r_m}\choose s}+{{t_m}\choose {s-1}}$ for any graph $G$ of size $m$ and order $n$.
\end{theorem}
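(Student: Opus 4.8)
The plan is to prove, by strong induction on $m$, the slightly more flexible statement that \emph{for every integer $m\ge 0$ and every graph $G$ with $m$ edges one has $k_s(G)\le g(m)$}, where $g(m):=\binom{r_m}{s}+\binom{t_m}{s-1}$ with $m=\binom{r_m}{2}+t_m$, $0\le t_m\le r_m$, and then to specialize to $m\ge\binom s2$. The order $n$ never really enters: isolated vertices change neither $m$ nor $k_s(G)$, and the hypothesis $m\le\binom n2$ is needed only so that $G$ exists. The case $m=0$ (and, more generally, every $m<\binom s2$, where both sides vanish) is the base; so assume $m\ge 1$ and, discarding isolated vertices, that $\delta(G)\ge 1$.

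Fix a vertex $v$ of minimum degree and set $d=\deg(v)\ge 1$. Splitting the $s$-cliques of $G$ according to whether they contain $v$ gives $k_s(G)=k_s(G-v)+k_{s-1}(G[N_G(v)])$. Since $G[N_G(v)]$ has only $d$ vertices, $k_{s-1}(G[N_G(v)])\le\binom d{s-1}$, and since $G-v$ has $m-d<m$ edges the induction hypothesis yields $k_s(G-v)\le g(m-d)$; hence $k_s(G)\le g(m-d)+\binom d{s-1}$. Moreover $d\le r_m$: from $n\ge d+1$ we get $d(d+1)\le dn\le 2m\le r_m(r_m+1)$, and $x\mapsto x(x+1)$ is increasing. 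It therefore suffices to establish
\[
g(m-d)+\binom d{s-1}\ \le\ g(m)\qquad(1\le d\le r_m),
\]
and this is the step I expect to carry the weight of the argument.

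I would prove the displayed inequality by splitting on the cascade representation of $m-d$. If $d\le t_m$, then $m-d=\binom{r_m}{2}+(t_m-d)$ is already in standard form, so the claim reduces to $\binom{t_m-d}{s-1}+\binom d{s-1}\le\binom{t_m}{s-1}$, an instance of the Vandermonde-type superadditivity $\binom ak+\binom bk\le\binom{a+b}{k}$ (valid for nonnegative integers $a,b$, since $\binom{a+b}{k}=\sum_i\binom ai\binom b{k-i}$). If instead $t_m<d$, then $d=r_m$ is impossible — it would force $2t_m\ge 2r_m$, hence $t_m=r_m>d$, absurd — so $d\le r_m-1$ and $m-d=\binom{r_m}{2}-(d-t_m)$ with $1\le d-t_m\le r_m-1$; thus $\binom{r_m-1}{2}\le m-d<\binom{r_m}{2}$, giving $r_{m-d}=r_m-1$ and $t_{m-d}=(r_m-1)-(d-t_m)$. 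Substituting and using Pascal's identity $\binom{r_m}{s}=\binom{r_m-1}{s}+\binom{r_m-1}{s-1}$ reduces the claim to
\[
\binom{(r_m-1)-(d-t_m)}{s-1}+\binom d{s-1}\ \le\ \binom{r_m-1}{s-1}+\binom{t_m}{s-1}.
\]
Both arguments on the left lie in the interval $[t_m,\,r_m-1]$ and sum to $(r_m-1)+t_m$, exactly as do the two arguments on the right; since $k\mapsto\binom k{s-1}$ is convex on the nonnegative integers (its second difference is $\binom k{s-3}\ge 0$ for $s\ge 3$), the extreme pair $\{t_m,\,r_m-1\}$ majorizes the pair on the left, and the inequality follows. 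This closes the induction, and taking $m\ge\binom s2$ gives Theorem~\ref{001}.

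Thus the conceptual core is just two elementary facts about binomial coefficients — superadditivity and convexity of $\binom\cdot{s-1}$ — while the genuinely fiddly part will be the bookkeeping of the cascade representation of $m-d$ together with the small boundary sub-cases hidden in the min-degree estimate (for instance, checking that $d=r_m$ forces $t_m=r_m$, which is what makes the two cases above exhaustive). A possible alternative is to invoke the Kruskal--Katona theorem for the $s$-uniform set system formed by the $s$-cliques of $G$, whose $(s-2)$-nd shadow is contained in $E(G)$; but extracting the exact integer bound from Kruskal--Katona again requires its cascade form, so this would not be shorter than the self-contained induction above.
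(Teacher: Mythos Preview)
The paper does not actually prove Theorem~\ref{001}: it is quoted from Erd\H{o}s~\cite{bce} as background, with no argument given. So there is no ``paper's own proof'' to compare against. That said, your induction-on-$m$ argument is correct, and in fact it is precisely the template the paper later uses to prove its connected analogue, Theorem~\ref{111}: delete a vertex of small degree, bound the link by $\binom{d}{s-1}$, apply the induction hypothesis to the remainder, and close the gap by the convexity/majorization inequality for binomial coefficients (the paper isolates this last step as Lemma~\ref{aab}, with the same two-case split on whether $d$ exceeds the ``remainder'' term in the cascade form). The only structural difference is that in the connected setting one cannot simply take a vertex of minimum degree, so the paper has to work harder (Lemma~\ref{33}) to find a non-cutvertex of degree at most $r_{m,n}-1$; in the unrestricted setting your min-degree bound $d\le r_m$ via $d(d+1)\le 2m\le r_m(r_m+1)$ replaces that lemma entirely.

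One small wording slip: in Case~2 you write ``hence $t_m=r_m>d$, absurd''. What you actually derive is $t_m\ge r_m$, hence $t_m=r_m=d$, which contradicts the standing assumption $t_m<d$ of Case~2; the conclusion is right but the inequality sign should be $=$, not $>$. Everything else checks out, including the Vandermonde argument in Case~1 (which the paper would instead deduce from Lemma~\ref{aab} with $c=t_m$) and the majorization step in Case~2.
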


Let $K_{r_m}^{t_m}$ be the graph obtained by joining a new vertex to $t_m$ vertices of a copy $K_{r_m}$. One can see that the bound in Theorem \ref{001} is attained when $G$ is isomorphic to $K_{r_m}^{t_m}$ with possibly some isolated vertices. Naturally, one may consider an improvement by the following question.

\begin{que}\label{ques1}
Let $\mathscr{G}_{m,n}$ be the set of connected graphs of size $m$ and order $n$.
what is the maximum number of $s$-cliques over all graphs in $\mathscr{G}_{m,n}$?
\end{que}

In Section 2, the sharp upper bound in Question \ref{ques1} is obtained and corresponding extremal graphs are completely characterized.

Another motivation of considering Question \ref{ques1} comes from a classic topic in spectral graph theory.
Let $\lambda_1(G),\lambda_2(G),\cdots,\lambda_n(G)$ be the eigenvalues in nonincreasing order of the adjacency matrix $A(G)$ of a graph $G$. For an integer $j\in [0,n-1]$, $\sum_{i=1}^{n}\lambda_i^j(G)$ is called the $j$-th \emph{spectral moment} of $G$ and denoted by $S_j(G)$. We know that $S_j(G)$ is the number of closed walks of length $j$ (see \cite{aa}). Hence, $S_3(G)=6k_3(G)$. The sequence of spectral moments $(S_0(G),S_1(G),\dots,S_{n-1}(G))$ is denoted by $S(G)$. For two graphs $G_1$ and $G_2$, we denote $G_1\prec_S G_2 $ if there is a positive integer $j\le n-1$ such that $ S_i(G_1)=S_i(G_2) $ for $i<j$ and $S_j(G_1)<S_j(G_2)$. Now, we also say that $G_1$ comes before $G_2$ in $ S $-order. Up to now, many results on the $S$-order of graphs have been obtained. Andriantian and Wagner \cite{da} characterized the trees with maximum $S_k(T)$ over all trees with a given degree sequence. Cvetkovi$\acute{c}$ et al. \cite{bab} characterized the first and the last graphs in $ S $-order of trees and unicyclic graphs. Pan et al. \cite{bad} gave the last and the second last quasi-trees. Cheng et al. \cite{b} determined the last $d+\lfloor\frac{d}{2}\rfloor-2$ graphs, in $S$-order, of all trees with order $n$ and diameter $d$. Li and Zhang \cite{bag} characterized the last and the second last graphs, in $S$-order, over all graphs with given number of cut edges. As an application of main result, in Section 3, we determine the last graph in $ S $-order over all graphs in $ \mathscr{G}_{m,n}$.

%
%

\section{The maximum number of $s$-cliques in connected graphs of given size and order}

First, let us introduce some definitions and notations. A maximal 2-connected subgraph in a graph is called a \emph{block}.
For a given graph $G$ and a positive integer $s$, denote by $ G^s$ the graph obtained from $G$ by iteratively removing the vertices of degree at most $s$ until the resulting graph has minimum degree at least $s+1$ or is empty. It is well known that $G^{s}$ is unique and does not depend on the order of vertex deletion (see~\cite{pit}). The following lemmas will be used to prove the main result of this section.

\begin{lemma}\label{aaa}
If $H_0$ is an induced subgraph of a connected graph $G$ with $[|E({G})|-|V({G})|]-[|E({H_0})|-|V({H_0})|]=k$. Then $k\ge 0$.
Furthermore, $H_0^{s-2}\cong G^{s-2}$ for any $s\ge k+3$.
\end{lemma}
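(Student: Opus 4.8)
We may assume $V(H_0)\neq\varnothing$. The plan is to work with the loopless multigraph $G^{*}$ obtained from $G$ by contracting the whole set $V(H_0)$ to a single vertex $h$ (deleting the resulting loops, keeping parallel edges). Contraction preserves connectedness, $G^{*}$ has $|V(G)|-|V(H_0)|+1$ vertices and $|E(G)|-|E(H_0)|$ edges (the edges inside $H_0$ disappear, all others survive, those leaving $H_0$ becoming edges at $h$), and a connected multigraph on $N$ vertices has at least $N-1$ edges; hence $|E(G)|-|E(H_0)|\ge|V(G)|-|V(H_0)|$, i.e.\ $k\ge 0$. What matters even more is that the cycle rank $\beta(G^{*})$ (for us, $|E|-|V|+\#\text{components}$, here $|E(G^{*})|-|V(G^{*})|+1$) equals $k$ exactly — collapsing $H_0$ to one point is what makes the ambient cycle rank equal to $k$ and not something larger. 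Set $t=s-2$, so that $s\ge k+3$ becomes $t\ge k+1$, and recall that for any graph $F$, $F^{t}$ is the order-independent result of iteratively deleting vertices of degree $\le t$; consequently any subgraph of $F$ with minimum degree $\ge t+1$ has its vertex set inside $V(F^{t})$, and no vertex removed along such a deletion sequence in $F$ lies in $V(F^{t})$ (the first vertex of $V(F^{t})$ to be removed would still have degree $\ge t+1$, a contradiction). Applying the first fact with $F=H_0$, $H_0^{t}$ is a subgraph of $G$ with $\delta\ge t+1$, so $V(H_0^{t})\subseteq V(G^{t})$; it therefore remains to prove $V(G^{t})\subseteq V(H_0)$, for then $G^{t}=H_0[V(G^{t})]$ is a subgraph of $H_0$ with $\delta\ge t+1$, so $V(G^{t})\subseteq V(H_0^{t})$, whence $V(G^{t})=V(H_0^{t})$ and $G^{t}=H_0^{t}$ (equal induced subgraphs of $G$), i.e.\ $H_0^{s-2}\cong G^{s-2}$.

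To show $V(G^{t})\subseteq V(H_0)$, equivalently that no vertex of $W:=V(G)\setminus V(H_0)$ survives the $t$-deletion process, I will delete \emph{all} of $W$ from $G$ by degree-$\le t$ deletions, which by the above suffices. Deleting a vertex of $W$ from $G$ and then recontracting $V(H_0)$ is the same as deleting that vertex from $G^{*}$, and contraction leaves unchanged the degree of every vertex outside $V(H_0)$; so it is enough to delete all of $V(G^{*})\setminus\{h\}$ from $G^{*}$ by deletions of vertices $\neq h$ of current degree $\le k+1$ ($\le t$). This is accomplished by repeatedly invoking the following fact, in which $h$ is an arbitrary distinguished vertex:
\[(\ast)\qquad\text{every loopless multigraph }N\text{ on }\ge 2\text{ vertices with }\beta(N)\le k\text{ has a vertex }v\neq h\text{ with }\deg_N(v)\le k+1.\]
Indeed, every graph arising in the process is $G^{*}$ with some vertices deleted, so still has cycle rank $\le\beta(G^{*})=k$ (cycle rank never increases under vertex deletion), and $(\ast)$ keeps producing a removable vertex $\neq h$ until only $h$ remains, at which point all of $W$ has been deleted.

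To prove $(\ast)$, suppose every $v\neq h$ had $\deg_N(v)\ge k+2$. Then $N$ is connected, since a component avoiding $h$ would have minimum degree $\ge k+2$, hence at least $k+3$ vertices, hence cycle rank at least $\frac{k(k+3)}{2}+1>k\ge\beta(N)$, which is impossible. Writing $p=|V(N)|\ge 2$ and $\kappa=\beta(N)\le k$, connectedness gives $|E(N)|=p-1+\kappa$, so
$2(p-1+\kappa)=\deg_N(h)+\sum_{v\neq h}\deg_N(v)\ge\deg_N(h)+(\kappa+2)(p-1)$,
which rearranges to $\deg_N(h)\le\kappa(3-p)$. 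If $\kappa=0$ then $N$ is a tree and has a leaf $\neq h$, of degree $1<k+2$; if $\kappa\ge 1$ then $p\le 3$, yet $p=2$ forces a vertex $\neq h$ of degree $\kappa+1\le k+1$, and $p=3$ forces $\deg_N(h)=0$, contradicting connectedness. Each branch contradicts the assumption, so $(\ast)$ holds.

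The crux, I expect, is $(\ast)$ together with the decision to contract $H_0$ to a single vertex. A direct degree count inside $G$ only forces the $t$-deletion process to terminate once $t\ge 2k+2$; getting the sharp threshold $t\ge k+1$ requires both (i) the contraction, so that the ambient cycle rank is exactly $k$ and remains $\le k$ under deletion, and (ii) the precise inequality $\deg_N(h)\le\kappa(3-p)$, which — unlike a crude degree-sum bound — forces $p\le 3$ and finishes the argument.
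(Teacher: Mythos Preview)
Your argument is correct, but it takes a considerably longer road than the paper's. Rather than contracting $H_0$ to a point and proving the multigraph fact $(\ast)$, the paper simply orders $V(G)\setminus V(H_0)=\{v_1,\dots,v_a\}$ so that each $v_i$ has a neighbour in $V(H_0)\cup\{v_1,\dots,v_{i-1}\}$ (possible because $G$ is connected), sets $H_i=G[V(H_0)\cup\{v_1,\dots,v_i\}]$, and observes that
\[
k=\bigl(|E(G)|-|V(G)|\bigr)-\bigl(|E(H_0)|-|V(H_0)|\bigr)=\sum_{i=1}^a\bigl(d_{H_i}(v_i)-1\bigr),
\]
with every summand nonnegative. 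Hence each $d_{H_i}(v_i)\le k+1\le s-2$, so deleting $v_a,v_{a-1},\dots,v_1$ in that order is a legal start of the $(s-2)$-core process, yielding $G^{s-2}=H_0^{s-2}$ in one line. In particular, your closing remark that ``a direct degree count inside $G$ only forces the $t$-deletion process to terminate once $t\ge 2k+2$'' is off the mark: this back-degree count \emph{is} a direct degree count inside $G$, and it already hits the sharp threshold $t\ge k+1$ without any contraction. What your approach buys is a cycle-rank perspective and a reusable multigraph lemma $(\ast)$; what it costs is length. One small wrinkle: in your proof of $(\ast)$, the implication ``minimum degree $\ge k+2$, hence at least $k+3$ vertices'' fails for general loopless multigraphs (two vertices joined by $k+2$ parallel edges), though it is valid in your actual application since any component avoiding $h$ is an induced subgraph of the simple graph $G$; alternatively you can bypass the vertex count and bound $\beta(C)\ge \tfrac{k}{2}|V(C)|+1\ge k+1$ directly.
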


\begin{proof} The case $H_0\cong G$ is trivial. Assume that $H_0\ncong G$ and $V(G)\setminus V(H_0)=\{v_1,v_2,\cdots,v_a\}$. Since $G$ is connected, there exists some vertex $v_i$, say $v_1$, such that $v_1$ is adjacent to some vertex of $H_0$. Define $H_1=G[V(H_0)\cup \{v_1\}]$. Repeat this step, we obtain a sequence of graphs $H_1,H_2,\cdots,H_a=G$ with $d_{H_{i-1}}(v_i)\geq1$ for each $i\in\{1,2,\ldots,a\}$. Clearly, $|E(G)|-|V(G)|=|E({H_0})|-|V({H_0})|+\sum_{i=1}^a[d_{H_{i-1}}(v_i)-1]$. So $k=\sum_{i=1}^a[d_{H_{i-1}}(v_i)-1]\geq0$. This also indicates that $d_{H_{i-1}}(v_i)\leq k+1\leq s-2$ for any $i\in \{1,2,\cdots,a\}$. Since $G^{s-2}$ does not depend on the order of vertex deletion, we have $H_0^{s-2}\cong G^{s-2}$.
\end{proof}

Now we introduce two notations $r_{m,n}$ and $t_{m,n}$, which will be frequently used in subsequent discussions.

\begin{defi}\label{de1}
Let $m,n$ be two positive integers with $n-1\leq m\leq {n\choose 2}$. Put $$m-n={{r_{m,n}-1\choose 2}}+t_{m,n}-2,  \eqno(1)$$
where $r_{m,n}$ and $t_{m,n}$ are two positive integers. Specially, $2\le t_{m,n}\le r_{m,n}$ if $m-n\ge 0$; $t_{m,n}=r_{m,n}=1$ if $m-n=-1$.
\end{defi}

%

\begin{lemma}\label{33}
For any graph $G\in \mathscr{G}_{m,n}$ with $m-n\ge 0$,
there exists a non-cutvertex $u_0$ of $G^1 $ such that $d_{G^1}(u_0) \le r_{m,n}-1$, unless $G^1\cong K_{r_{m,n}+1}$.
\end{lemma}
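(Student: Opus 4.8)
The plan is to establish the contrapositive. Write $H=G^1$, $r=r_{m,n}$, $t=t_{m,n}$, assume $H\not\cong K_{r+1}$, and aim to produce a non-cutvertex $u_0$ of $H$ with $d_H(u_0)\le r-1$. Since $G$ is connected and $m\ge n$ it is not a forest, so its $2$-core $H$ is nonempty with minimum degree at least $2$; moreover the reduction $G\to G^1$ deletes only vertices of degree exactly $1$ (the current graph stays connected and never collapses to a single vertex), so $|E(H)|-|V(H)|=|E(G)|-|V(G)|=m-n=\binom{r-1}{2}+t-2$ with $2\le t\le r$. Suppose toward a contradiction that every non-cutvertex of $H$ has degree at least $r$ in $H$; we will force $H\cong K_{r+1}$, contrary to hypothesis.

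Case 1: $H$ is $2$-connected. Then no vertex of $H$ is a cutvertex, so every vertex has degree $\ge r$, giving $|V(H)|\ge r+1$ and $2|E(H)|\ge r|V(H)|$. Using $|E(H)|=|V(H)|+\binom{r-1}{2}+t-2$ and $t\le r$, one gets $(r-2)|V(H)|\le (r-1)(r-2)+2(t-2)\le(r-2)(r+1)$, hence $|V(H)|\le r+1$ when $r\ge 3$. Therefore $|V(H)|=r+1$; back-substituting forces $t=r$ and $|E(H)|=\binom{r+1}{2}$, so the simple graph $H$ on $r+1$ vertices with $\binom{r+1}{2}$ edges equals $K_{r+1}$.

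Case 2: $H$ is not $2$-connected. Then the block-cut tree of $H$ has at least two leaves, i.e.\ $H$ has at least two end blocks; let $B$ be one, with unique cutvertex $c$. Every $v\in V(B)\setminus\{c\}$ is a non-cutvertex of $H$ with $N_H(v)\subseteq V(B)$, so $d_B(v)=d_H(v)\ge r$; in particular $B$ is not a bridge (so it is $2$-connected) and $|V(B)|\ge r+1$, whence $2|E(B)|\ge r(|V(B)|-1)+d_B(c)\ge r(|V(B)|-1)+2$ and the cyclomatic number of $B$ satisfies $|E(B)|-|V(B)|+1\ge\frac{(r-2)(|V(B)|-1)}{2}+1\ge\frac{r(r-2)}{2}+1$. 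Since the cyclomatic numbers of the blocks of $H$ sum to $|E(H)|-|V(H)|+1=\binom{r-1}{2}+t-1$ and there are at least two end blocks, $\binom{r-1}{2}+t-1\ge 2\bigl(\tfrac{r(r-2)}{2}+1\bigr)=r(r-2)+2$; but $\binom{r-1}{2}+t-1\le\binom{r-1}{2}+r-1=\binom{r}{2}<r(r-2)+2$ because $r^2-3r+4>0$, a contradiction. Hence $H$ is $2$-connected, and Case 1 gives $H\cong K_{r+1}$, contrary to assumption; this contradiction yields the required $u_0$.

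The step we expect to be the main obstacle is Case 2: correctly reading off the block-cut structure --- that every end block here is automatically $2$-connected, that all but one of its vertices are non-cutvertices of $H$ with all their neighbours inside the block, and hence that its cyclomatic number is at least $\frac{r(r-2)}{2}+1$ --- and then balancing a sum of two such quantities against the global surplus $m-n+1$. One must also dispose separately of the small value $r=2$ (equivalently $m=n$), where $H$ is a cycle and the edge count in Case 1 degenerates.
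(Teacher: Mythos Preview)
Your argument follows the same two-case strategy as the paper: assume every non-cutvertex of $H=G^1$ has degree at least $r=r_{m,n}$, then in the $2$-connected case an edge count pins $|V(H)|=r+1$ and forces $H\cong K_{r+1}$, while if $H$ has a cutvertex two end blocks together carry more surplus than $m-n$ allows. Your packaging via cyclomatic numbers (which are nonnegative for every block) is a slightly tidier version of the paper's computation with $|E(B_i)|-|V(B_i)|$, but the arithmetic and the contradiction are the same.

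The one point you flag but leave open --- the degeneracy at $r=2$ --- is in fact \emph{not} repairable: when $m=n$ one has $r_{m,n}=t_{m,n}=2$, the $2$-core $G^1$ of a unicyclic $G$ is its unique cycle $C_k$, and for $k\ge 4$ every vertex has degree $2>r-1$ while $C_k\not\cong K_3$. So the lemma as stated actually fails at this boundary value. The paper's own proof has exactly the same gap (its assertion that ``the first two inequalities \ldots\ cannot be equalities simultaneously'' is vacuous when $r=2$, since the second inequality reads $0\ge 0$). This is a defect in the lemma's statement rather than in your approach; it does no harm downstream, since for $s\ge 3$ and $m=n$ the bound $k_s(G)\le \binom{2}{s}+\binom{2}{s-1}$ is immediate without invoking the lemma.
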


\begin{proof}
Let $G^1\in \mathscr{G}_{m',n'}$ and $G^1\ncong K_{r_{m,n}+1}$.
Suppose to the contrary that $d_{G^1}(u)\ge r_{m,n}$ for any non-cutvertex $u$ of $G^1$.
If $G^1$ has cutvertices, then there is at least two blocks $B_i$ containing exactly one cutvertex $u_i$ of $G^1$ for $i\in\{1,2\}$. Clearly, $$|E(B_i)|-n_i\ge \frac 12 [(n_i-1)r_{m,n}+d_{B_i}(u_i)]-n_i\ge\frac{(n_i-1)(r_{m,n}-2)}2\ge\frac{r_{m,n}(r_{m,n}-2)}2,$$ where $d_{B_i}(u_i)\geq2$ and $n_i=|V(B_i)|\geq r_{m,n}+1$ for $i\in\{1,2\}$. Note that $|E(B)|\geq |V(B)|$ for each block $B$ of $G^1$.
Then $$m'-n'> \sum_{i=1}^2(|E(B_i)|-n_i)\ge r_{m,n}^2-2r_{m,n}\ge m-n,$$
which contradicts Lemma \ref{aaa}.
So $G^1$ itself is a block. Since $d_{G^1}(u)\ge r_{m,n}$ for any non-cutvertex $u$ of $G^1$,
we have $n'\geq r_{m,n}+1$ and
$$m'-n'\ge\frac 12n'r_{m,n}-n'\ge\frac{(r_{m,n}+1)(r_{m,n}-2)}2={{r_{m,n}-1}\choose2 }+(r_{m,n}-2)\geq m-n.$$
Since $G^1\ncong K_{r_{m,n}+1}$, the first two inequalities of above can not be equalities simutaneously.
Hence, we also have $m'-n'>m-n$, a contradiction. This completes the proof.
\end{proof}

\begin{lemma}\label{aab}
For any integers $a\ge b\ge 1$ and $s\ge 2$, ${a\choose s}+{b\choose s}\le {{a+1}\choose s}+{{b-1}\choose s}$. More generally, for any integer $c$ with $\max\{a,b\}\le c \le a+b$, ${a\choose s}+{b\choose s}\le {c\choose s}+{{a+b-c}\choose s}$, with equality if and only if $c\le s-1$ or $c=\max\{a,b\}$.
\end{lemma}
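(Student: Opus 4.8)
The plan is to reduce the whole statement to monotonicity and ``convexity past $s-1$'' of the one-variable function $n\mapsto\binom{n}{s}$. Since the claim is symmetric in $a$ and $b$, and its right-hand side is symmetric under $c\leftrightarrow a+b-c$, I may assume $a\ge b$; then $\max\{a,b\}=a$, the admissible range is $a\le c\le a+b$, and $a+b-c$ stays in $[0,b]$, so every binomial coefficient occurring has a nonnegative upper index. The facts I will use, all immediate from Pascal's rule $\binom{n}{s}-\binom{n-1}{s}=\binom{n-1}{s-1}$, are: the first difference $\binom{n-1}{s-1}$ is nondecreasing in $n$; it equals $0$ for $n\le s-1$ and is positive for $n\ge s$; hence $\binom{n}{s-1}=\binom{n-d}{s-1}$ with $d\ge1$ forces the common value to be $0$, i.e.\ $n\le s-2$.

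The first inequality is the instance $c=a+1$ of the general one (legitimate since $b\ge1$), and is a one-line computation: $\big[\binom{a+1}{s}+\binom{b-1}{s}\big]-\big[\binom{a}{s}+\binom{b}{s}\big]=\binom{a}{s-1}-\binom{b-1}{s-1}\ge0$ because $a\ge b-1$. For the general inequality I telescope. Writing $\binom{c}{s}-\binom{a}{s}=\sum_{k=0}^{c-a-1}\binom{a+k}{s-1}$ and $\binom{b}{s}-\binom{a+b-c}{s}=\sum_{k=0}^{c-a-1}\binom{a+b-c+k}{s-1}$ (both sums have $c-a$ terms), the target inequality $\binom{c}{s}+\binom{a+b-c}{s}\ge\binom{a}{s}+\binom{b}{s}$ becomes $\sum_k\binom{a+k}{s-1}\ge\sum_k\binom{a+b-c+k}{s-1}$, which holds term by term since $c\ge b$ gives $a\ge a+b-c$.

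For the equality case I read off this term-by-term comparison: equality holds iff $\binom{a+k}{s-1}=\binom{a+b-c+k}{s-1}$ for every $k\in\{0,\dots,c-a-1\}$. If $c=a$ the sums are empty, which is precisely the case $c=\max\{a,b\}$. If $c>a$, then $c>b$, so in each pair the two upper indices differ by $c-b\ge1$; by the fact recalled above each $\binom{a+k}{s-1}$ must then vanish, and taking $k=c-a-1$ gives $c-1\le s-2$, i.e.\ $c\le s-1$. Conversely, if $c\le s-1$ then every term vanishes because $a+k\le c-1\le s-2$, so equality holds; this is exactly the stated condition. The computations are routine throughout; the only point requiring care is this equality analysis, where one must remember that $n\mapsto\binom{n}{s}$ is identically zero (hence not strictly convex) on $\{0,1,\dots,s-1\}$ — precisely why the clause ``$c\le s-1$'' appears.
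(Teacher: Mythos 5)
Your proof is correct and follows essentially the same route as the paper's: a one-step application of Pascal's rule $\binom{a}{s}+\binom{a}{s-1}=\binom{a+1}{s}$, iterated to reach the general case — your telescoping sums are just that iteration written in closed form. The only difference is presentational: you spell out the term-by-term equality analysis (forcing each $\binom{a+k}{s-1}$ to vanish, hence $c\le s-1$, unless the sum is empty, i.e.\ $c=\max\{a,b\}$) where the paper compresses it into ``iterate to get the general result,'' so your write-up is, if anything, the more complete of the two.
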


\begin{proof} It is known that ${{a}\choose s}+{a\choose s-1}={a+1\choose {s}}$.
Hence, $${a\choose s}+{b\choose s}={{a+1}\choose s}-{a\choose s-1}+{{b-1}\choose s}+{b-1\choose s-1}\leq{{a+1}\choose s}+{{b-1}\choose s},$$
with equality if and only if $a\leq s-2$. Iterate to get the general result.
\end{proof}

\begin{lemma}\label{ccc}
Let $m,n,s$ be three positive integers with $s\ge 3$ and $-1\le m-n\le {s\choose 2}-s-1$.
If $G\in\mathscr{G}_{m,n}$, then $k_s(G)=0$.
\end{lemma}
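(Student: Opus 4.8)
The plan is a short proof by contradiction that reduces directly to Lemma \ref{aaa}. Suppose $k_s(G)\ge 1$. Then $G$ contains $s$ pairwise adjacent vertices, and the subgraph they induce is precisely a copy of $K_s$; hence $G$ has an induced subgraph $H_0\cong K_s$. Moreover $H_0\ncong G$, since $H_0\cong G$ would force $m-n=|E(K_s)|-|V(K_s)|={s\choose 2}-s$, which is outside the range $-1\le m-n\le {s\choose 2}-s-1$ assumed in the statement.

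Next I would apply Lemma \ref{aaa} to the connected graph $G$ and its induced subgraph $H_0$. Since $|E(H_0)|-|V(H_0)|={s\choose 2}-s$ and $|E(G)|-|V(G)|=m-n$, the integer $k$ occurring in Lemma \ref{aaa} equals $(m-n)-\big({s\choose 2}-s\big)$. Lemma \ref{aaa} asserts $k\ge 0$, i.e. $m-n\ge {s\choose 2}-s$, which contradicts the hypothesis $m-n\le {s\choose 2}-s-1$. Therefore $G$ has no $s$-clique, that is, $k_s(G)=0$.

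I do not expect any genuine obstacle here. The only points to be checked are that an $s$-clique always induces a copy of $K_s$ (so that the \emph{induced}-subgraph hypothesis of Lemma \ref{aaa} is met, and that connectivity of $G$ is available as required by that lemma), and the elementary bookkeeping of the quantity $|E|-|V|$ for $K_s$. The assumption $s\ge 3$ is used only to ensure that the interval $[-1,\,{s\choose 2}-s-1]$ for $m-n$ is nonempty; the argument itself is uniform in $s$. One could alternatively phrase the whole proof without contradiction: whenever $G\in\mathscr{G}_{m,n}$ contains an induced $K_s$, Lemma \ref{aaa} shows $m-n\ge{s\choose 2}-s$, so under the stated bound on $m-n$ no induced $K_s$ can exist.
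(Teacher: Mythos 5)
Your proposal is correct and is essentially identical to the paper's proof: both take an induced copy of $K_s$ guaranteed by $k_s(G)\ge 1$ and apply Lemma \ref{aaa} to derive ${s\choose 2}-s\le m-n$, contradicting the hypothesis. The side remark that $H_0\ncong G$ is harmless but unnecessary, since Lemma \ref{aaa} already covers the case $H_0\cong G$.
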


\begin{proof}
Suppose to the contrary, then there is an induced subgraph $H$ of $G$ such that $H\cong K_s$. By Lemma~\ref{aaa}, we have
$${s\choose 2}-s=|E(H)|-|V(H)|\le|E({G})|-|V({G})|\le {s\choose 2}-s-1,$$ a contradiction.
\end{proof}

Now we give two main results on the maximum number of $s$-cliques in connected graphs of given size and order.

\begin{theorem}\label{111}
For any positive integers $m,n,s$ with $s\ge 3$ and $n-1\le m\leq {{n}\choose2}$, $max~\{k_s(G)~|~G\in\mathscr{G}_{m,n}\}={{r_{m,n}}\choose s}+{{t_{m,n}}\choose{s-1}}$.
\end{theorem}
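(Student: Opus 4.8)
The plan is to prove the upper bound by induction on $m-n$, using the reduction graph $G^{s-2}$ as the main structural tool together with Lemma~\ref{33} to peel off a low-degree vertex. First I would dispose of the base range: when $-1\le m-n\le \binom{s}{2}-s-1$, Lemma~\ref{ccc} gives $k_s(G)=0$, which matches $\binom{r_{m,n}}{s}+\binom{t_{m,n}}{s-1}$ since in that range $r_{m,n}\le s-1$ (indeed $\binom{r_{m,n}-1}{2}+t_{m,n}-2\le \binom{s}{2}-s-1$ forces $r_{m,n}\le s-1$, using $2\le t_{m,n}\le r_{m,n}$), so both binomial terms vanish. The matching construction is $K_{r_m}^{t_m}$-type but connected: take $K_{r_{m,n}}$ together with a vertex joined to $t_{m,n}$ of its vertices, then attach a path of the appropriate length to reach order $n$; a direct count shows this graph has size $m$, order $n$, is connected, and has exactly $\binom{r_{m,n}}{s}+\binom{t_{m,n}}{s-1}$ $s$-cliques, establishing that the bound is attained.

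For the inductive step, fix $G\in\mathscr{G}_{m,n}$ with $m-n\ge \binom{s}{2}-s$, so that $r_{m,n}\ge s$. I would pass to $H=G^{s-2}$; by Lemma~\ref{aaa} (applied with $k=0$ since $G$ is connected and, after we note $k_s$ is unaffected by vertices of degree $\le s-2$) every $s$-clique of $G$ lies in $H$, hence $k_s(G)=k_s(H)$, and $H$ has minimum degree $\ge s-1$ or is a union of already-counted pieces. Now apply Lemma~\ref{33} to obtain a non-cutvertex $u_0$ of $G^1$ — actually of $H$ — with $d(u_0)\le r_{m,n}-1$ (the exceptional case $G^1\cong K_{r_{m,n}+1}$ is handled separately: there $k_s(G)=\binom{r_{m,n}+1}{s}$, and one checks $m-n$ for $K_{r_{m,n}+1}$ equals $\binom{r_{m,n}+1}{2}-(r_{m,n}+1)=\binom{r_{m,n}-1}{2}+(r_{m,n}-1)-2$, i.e. $t_{m,n}=r_{m,n}-1$... so $\binom{r_{m,n}}{s}+\binom{r_{m,n}-1}{s-1}=\binom{r_{m,n}+1}{s}$ by Pascal, matching; if $H$ is a proper subgraph of $G^1$ a short separate argument bounds things even better). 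Deleting $u_0$: the $s$-cliques through $u_0$ number at most $\binom{d(u_0)}{s-1}\le\binom{r_{m,n}-1}{s-1}$, and $G-u_0$ remains connected with $m'=m-d(u_0)$ edges and $n'=n-1$ vertices. One computes that $m'-n' = (m-n)-(d(u_0)-1) < m-n$, and checks how the pair $(r_{m',n'},t_{m',n'})$ relates to $(r_{m,n},t_{m,n})$: decreasing $m-n$ by $\delta:=d(u_0)-1$ with $0\le \delta\le r_{m,n}-2$ drops us to a strictly earlier case, and by induction $k_s(G-u_0)\le \binom{r_{m',n'}}{s}+\binom{t_{m',n'}}{s-1}$. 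Summing, $k_s(G)\le \binom{r_{m',n'}}{s}+\binom{t_{m',n'}}{s-1}+\binom{d(u_0)}{s-1}$, and it remains to verify this is at most $\binom{r_{m,n}}{s}+\binom{t_{m,n}}{s-1}$; Lemma~\ref{aab} is exactly the convexity/compression inequality that makes this work, since moving $\delta$ ``units'' of $m-n$ back in shifts binomial mass in the favorable direction.

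The main obstacle I expect is the bookkeeping in the last paragraph: translating the drop $m-n \mapsto (m-n)-\delta$ into the change of $(r_{m,n},t_{m,n})$ via the defining equation~(1), since the ``carry'' when $t_{m,n}$ would go below $2$ changes $r_{m,n}$ to $r_{m,n}-1$ and $t_{m,n}$ to $t_{m,n}+r_{m,n}-2$, and one must confirm the target inequality $\binom{r_{m',n'}}{s}+\binom{t_{m',n'}}{s-1}+\binom{d(u_0)}{s-1}\le\binom{r_{m,n}}{s}+\binom{t_{m,n}}{s-1}$ holds in both the no-carry and carry regimes — this is where Lemma~\ref{aab} must be invoked carefully, possibly with an auxiliary identity like $\binom{r}{s}+\binom{r}{s-1}=\binom{r+1}{s}$ to repackage the clique count of a near-complete block. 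A secondary subtlety is ensuring the non-cutvertex $u_0$ supplied by Lemma~\ref{33} actually satisfies $d_G(u_0)=d_{G^1}(u_0)$ (it does, since $G^1$ is obtained by deleting only degree-$\le 1$ vertices, which cannot be neighbors of a vertex of degree $\ge 2$ in $G^1$... more care needed when $d_{G^1}(u_0)$ is small), and that $k_s$ is genuinely preserved under the reduction to $G^1$ and then $G^{s-2}$; these are routine but need to be stated cleanly.
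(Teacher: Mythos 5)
Your proposal follows essentially the same route as the paper's proof: induction on $m-n$, the base case via Lemma~\ref{ccc}, the matching construction built from $K_{r_{m,n}}^{t_{m,n}}$, the exceptional case $G^1\cong K_{r_{m,n}+1}$, Lemma~\ref{33} to extract a non-cutvertex of degree at most $r_{m,n}-1$, and Lemma~\ref{aab} together with Pascal's identity to close the binomial inequality in exactly the paper's two regimes (no carry: $2\le d_0\le t_{m,n}-1$; carry: $t_{m,n}\le d_0\le r_{m,n}-1$). Two details need fixing. First, for $G^1\cong K_{r_{m,n}+1}$ one gets $t_{m,n}=r_{m,n}$, not $r_{m,n}-1$, after which $\binom{r_{m,n}}{s}+\binom{r_{m,n}}{s-1}=\binom{r_{m,n}+1}{s}$ is immediate. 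Second, your justification that $d_G(u_0)=d_{G^1}(u_0)$ is false as stated (pendant vertices of $G$ can perfectly well hang off a high-degree vertex of $G^1$), and deleting $u_0$ from $G$ could even disconnect it; the paper sidesteps both issues by noting $k_s(G)=k_s(G^1)$ and deleting $u_0$ from $G^1$ itself, so that $H=G^1-u_0$ is connected, lies in $\mathscr{G}_{m'-d_0,\,n'-1}$ with $m'-n'=m-n$, and the parameter bookkeeping goes through cleanly.
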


\begin{proof} The proof is proceeded by induction on $m-n$. Firstly, we consider the case $-1\le m-n\le{s\choose 2}-s-1$. By Lemma \ref{ccc}, $k_s(G)=0$. And by Definition \ref{de1},
we can find that $r_{m,n}\le s-1$ and $t_{m,n}\le s-2$.
So the result holds trivially.

In the following we need to consider the case $m-n\ge{s\choose 2}-s$. Let $G^{*}$ be the graph obtained from $K_{r_{m,n}}^{t_{m,n}}$ by adding ${n-r_{m,n}-1}$ pendant edges to a vertex of degree $r_{m,n}$. Clearly, $G^*\in \mathscr{G}_{m,n}$ and $k_s(G^*)={{r_{m,n}}\choose s}+{{t_{m,n}}\choose{s-1}}$. This implies that the maximum of $k_s(G)$ is at least as large as claimed for any $m,n$ with $m-n\ge{s\choose 2}-s$. So it suffices to show $k_s(G)\leq{{r_{m,n}}\choose s}+{{t_{m,n}}\choose{s-1}}$ for any graph $G\in\mathscr{G}_{m,n}$.
Assume that $G^1\in\mathscr{G}_{m',n'}$, clearly, $m'-n'=m-n$.
If $G^1\cong K_{r_{m,n}+1}$, then $k_s(G)={{r_{m,n}+1}\choose s}={{r_{m,n}}\choose s}+{{t_{m,n}}\choose{s-1}},$ as desired.
Suppose that $G^1\ncong K_{r_{m,n}+1}$, then
by Lemma \ref{33}, there exists a non-cutvertex $u_0$ of $G^1$ with $d_{G^1}(u_0) \le r_{m,n}-1$. Denote $H=G^1-u_0$ and $d_0=d_{G^1}(u_0)$. Then $H\in \mathscr{G}_{m'-d_0,n'-1}$, where $$(m'-d_0)-(n'-1)={{r_{m,n}-1}\choose2}+(t_{m,n}-d_0+1)-2. \eqno(2)$$
And we can see that $$k_s(G)=k_s(G^1)=k_s(H)+k_{s-1}(G^1[N(u_0)]),~~~k_{s-1}(G^1[N(u_0)])\leq{{d_0}\choose{s-1}}. \eqno(3)$$
Now we distinguish two cases to complete the proof.

\noindent{\bf{Case 1.}} $2\leq d_0\le t_{m,n}-1$. Then $2\leq t_{m,n}-d_0+1\leq r_{m,n}$. Comparing equalities (1) and (2), we have $r_{m'-d_0,n'-1}=r_{m,n}$ and $t_{m'-d_0,n'-1}=t_{m,n}-d_0+1$.
And by the induction hypothesis,
$$k_s(G)\leq k_s(H)+{{d_0}\choose{s-1}}
\le{{r_{m,n}}\choose s}+{{t_{m,n}-d_0+1}\choose{s-1}}+{{d_0}\choose{s-1}}
\leq{{r_{m,n}}\choose s}+{{t_{m,n}}\choose{s-1}}, \eqno(4)$$
where the last inequality comes from setting $c=t_{m,n}$ in Lemma \ref{aab}.

\noindent{\bf{Case 2.}} $t_{m,n}\leq d_0\le r_{m,n}-1$. Now, equality (2) can be written as follows:
$$(m'-d_0)-(n'-1)={{r_{m,n}-2}\choose2}+(r_{m,n}+t_{m,n}-d_0-1)-2, \eqno(5)$$ where $2\leq r_{m,n}+t_{m,n}-d_0-1\leq r_{m,n}-1$. Comparing equalities (1) and (5), we have $r_{m'-d_0,n'-1}=r_{m,n}-1$ and $t_{m'-d_0,n'-1}=r_{m,n}+t_{m,n}-d_0-1.$
By the induction hypothesis,
$$k_s(G)\leq k_s(H)+{{d_0}\choose{s-1}}
\le {{r_{m,n}-1}\choose s}+{{r_{m,n}+t_{m,n}-d_0-1}\choose {s-1}}+{{d_0}\choose{s-1}}\leq{{r_{m,n}}\choose s}+{{t_{m,n}}\choose{s-1}}, \eqno(6)$$
where the last inequality comes from setting $c=r_{m,n}-1$ in Lemma \ref{aab}
and a well-known combinatorial identity ${{r_{m,n}-1}\choose s}+{{r_{m,n}-1}\choose s-1}={{r_{m,n}}\choose s}$.
This completes the proof.
\end{proof}

Let $\mathbb{B}(p,q)$ be the set of graphs obtained from $K_p$ and $C_q$ by adding a path of length $r\geq 0$ between one vertex of $K_p$ and another of $C_q$. Let $B(p,q)$ denote the graph in $\mathbb{B}(p,q)$ with $r=0$. By Lemma \ref{ccc}, $k_s(G)=0$ for any graph $G\in\mathscr{G}_{m,n}$ with $m-n\leq {s\choose 2}-s-1$.
The following theorem characterizes the extremal graphs for $m-n\geq{s\choose 2}-s.$

\begin{theorem}\label{222} Let $m, n,s$ be three integers with $s\geq3$ and ${s\choose 2}-s\le m-n\leq {{n}\choose2}-n$. Let $G$ be an extremal graph with maximal number of $s$-cliques over all graphs in $\mathscr{G}_{m,n}$.\\
 (\romannumeral1) If $t_{m,n}\leq s-2$, then $G^{s-2}\cong  K_{r_{m,n}}$; \\
 (\romannumeral2) if $t_{m,n}\ge s-1$, then $G^{s-2}\cong K_{r_{m,n}}^{t_{m,n}}$ or particularly, $G^{s-2} \in \mathbb{B}(r_{m,n},3)$ for $s=3$, $t_{m,n}=2$ and $r_{m,n}\geq3$.
\end{theorem}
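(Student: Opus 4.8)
The plan is to refine the proof of Theorem~\ref{111} to track the equality cases. Recall that in that proof, after passing to the $s$-core $G^1$, one of three things happens: either $G^1\cong K_{r_{m,n}+1}$, or we delete a non-cutvertex $u_0$ with $d_0=d_{G^1}(u_0)\le r_{m,n}-1$ and land in Case~1 ($2\le d_0\le t_{m,n}-1$) or Case~2 ($t_{m,n}\le d_0\le r_{m,n}-1$). The strategy is to determine, in each scenario, exactly when the chain of inequalities (4) or (6) becomes a chain of equalities, and then to describe the structure of $G^{s-2}$ (which equals $(G^1)^{s-2}=H^{s-2}$ by Lemma~\ref{aaa}, since $d_0\le s-2$ as long as we are careful about the range of $m-n$). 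I would argue by induction on $m-n$, mirroring Theorem~\ref{111}, with the base case $m-n={s\choose 2}-s$ handled directly: there $r_{m,n}=s$, and the only way to get a single $s$-clique with the right excess $|E|-|V|={s\choose 2}-s$ is for $G^{s-2}$ to be $K_s=K_{r_{m,n}}$ (when $t_{m,n}\le s-2$, i.e. $t_{m,n}=2\le s-2$ forcing $s\ge4$) while for $s=3$, $t_{m,n}=2$, $r_{m,n}=3$ the excess-$0$ blocks containing a triangle are exactly $K_3$ and, via added structure of excess $0$, graphs in $\mathbb{B}(3,3)$ — this is where the $C_3$ exception enters.

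For the inductive step, the key is to read off the equality conditions in Lemma~\ref{aab}. In Case~1, equality in (4) requires: $k_{s-1}(G^1[N(u_0)])={d_0\choose s-1}$ (so $N(u_0)$ induces $K_{d_0}$ in $G^1$), $H$ is extremal for its parameters, and equality in Lemma~\ref{aab} with $c=t_{m,n}$, $a=\max\{t_{m,n}-d_0+1,d_0\}$, $b=\min\{\cdot\}$; the latter forces $t_{m,n}=\max\{t_{m,n}-d_0+1,d_0\}$, i.e. $d_0=1$ (excluded) or $d_0=t_{m,n}$ — but Case~1 has $d_0\le t_{m,n}-1$, so in fact equality is only possible when $t_{m,n}-d_0+1\le s-2$, i.e. the larger binomial term is already degenerate. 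Working this through shows $H^{s-2}$ must itself be the claimed extremal core, and since $u_0$ has degree $d_0\le s-2$ in $G^1$ it is stripped away by the $(s-2)$-core operation, so $G^{s-2}=H^{s-2}$ has the right shape. Case~2 is analogous, using $c=r_{m,n}-1$ and the identity ${r_{m,n}-1\choose s}+{r_{m,n}-1\choose s-1}={r_{m,n}\choose s}$; equality forces $r_{m,n}+t_{m,n}-d_0-1=r_{m,n}-1$ hence $d_0=t_{m,n}$, again with $N(u_0)$ inducing a clique, and the inductive hypothesis applied to $H$ (which has parameters $r-1$ and $r+t-d_0-1=r-1$, i.e. a ``full'' second parameter) pins down $H^{s-2}$.

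There is a subtlety I would flag explicitly: the $(s-2)$-core, not the $1$-core, is what the theorem is about, and Lemma~\ref{aaa} guarantees $H_0^{s-2}\cong G^{s-2}$ only when $s\ge k+3$ where $k=[|E(G)|-|V(G)|]-[|E(H_0)|-|V(H_0)|]$; applied with $H_0=G^1$ this is fine since $G^1$ is obtained by deleting degree-$\le1$ vertices so $k=0$ and $s\ge3$. When we further delete $u_0$ of degree $d_0$, the excess drops by $d_0-1$, so to invoke the induction I must know $d_0-1\le s-3$, i.e. $d_0\le s-2$; in Case~1 $d_0\le t_{m,n}-1$ and in Case~2 $d_0\le r_{m,n}-1$, so for large $m-n$ where $r_{m,n}>s-1$ this can fail — but then the \emph{extremal} $G$ has $G^1\cong K_{r_{m,n}+1}$ is impossible, so one must instead observe that for the extremal graph the degrees we delete are controlled, or equivalently argue that $G^{s-2}$ is obtained from $(G^1)$ by removing only low-degree vertices so it stabilizes. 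The cleanest route, and the one I would take, is to avoid deleting a single vertex when $r_{m,n}$ is large and instead directly compare: if $G$ is extremal then $G^{s-2}\in\mathscr{G}_{m'',n''}$ with $m''-n''=m-n$ still (deletions of degree $\le s-2$ preserve excess only when degrees are $\le1$, so in general $m''-n''$ drops); this forces $k_s$ of $G^{s-2}$ to still meet the Theorem~\ref{111} bound for \emph{its own} parameters, and an extremal graph of minimum degree $\ge s-1$ achieving ${r\choose s}+{t\choose s-1}$ is — by a short direct argument using Kruskal--Katona-type convexity as in Lemma~\ref{aab} — exactly $K_{r_{m,n}}$, $K_{r_{m,n}}^{t_{m,n}}$, or (for $s=3$, $t_{m,n}=2$) a member of $\mathbb{B}(r_{m,n},3)$. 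I expect this last structural classification of minimum-degree-$\ge s-1$ extremal graphs to be the main obstacle, since it requires ruling out all other configurations with the same $s$-clique count, and in particular handling the genuinely non-unique $s=3$, $t_{m,n}=2$ case where both $K_{r_{m,n}}^{2}$ and the ``$K_{r_{m,n}}$ plus a pendant triangle'' family $B(r_{m,n},3)$ (and its subdivisions, which are killed by the $(s-2)$-core) tie.
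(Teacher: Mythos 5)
Your overall strategy (induction on $m-n$, tracing equality through the chains (4) and (6) of Theorem~\ref{111}) is the same as the paper's, but the proposal has two substantive gaps. First, in Case~2 the equality condition of Lemma~\ref{aab} with $c=r_{m,n}-1$ reads $c=\max\{r_{m,n}+t_{m,n}-d_0-1,\,d_0\}$ (the alternative $c\le s-2$ being excluded since $r_{m,n}\ge s$), and this yields \emph{two} subcases, $d_0=t_{m,n}$ and $d_0=r_{m,n}-1$; you assert that equality forces $r_{m,n}+t_{m,n}-d_0-1=r_{m,n}-1$, i.e.\ $d_0=t_{m,n}$ only, and so you never treat the $d_0=r_{m,n}-1$ branch, which is precisely where conclusion (\romannumeral1) with $N[u_0]$ an $r_{m,n}$-clique, and a further instance of (\romannumeral2), arise in the paper. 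Second, and more seriously, you correctly flag that when $d_0\ge s-1$ one cannot conclude $G^{s-2}\cong H^{s-2}$ from Lemma~\ref{aaa}, but your proposed repair --- directly classifying the connected graphs of minimum degree $\ge s-1$ that attain the bound of Theorem~\ref{111} --- is exactly the structural content of the theorem restricted to such graphs, and you leave it as an assertion (``a short direct argument using Kruskal--Katona-type convexity''), conceding yourself that it is ``the main obstacle.'' The paper closes this case concretely: it computes that $|E(H^{s-2})|-|V(H^{s-2})|=|E(H)|-|V(H)|$, so Lemma~\ref{aaa} gives $H^1\cong (H^{s-2})^1\cong H^{s-2}$, a graph already pinned down by the induction hypothesis, and then asks whether $N(u_0)\subseteq V(H^1)$; the answer can be ``no'' only when $t_{m,n}=2$ and $s=3$, which is exactly where the $\mathbb{B}(r_{m,n},3)$ exception enters. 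Without an argument of this kind your induction does not close.

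A smaller point: at the base case $m-n={s\choose 2}-s$ one has $r_{m,n}=t_{m,n}=s-1$ (not $r_{m,n}=s$), so the base case lies in alternative (\romannumeral2) with $K_{r_{m,n}}^{t_{m,n}}\cong K_s$, not in (\romannumeral1); your discussion of the base case, and of the $s=3$ exception supposedly appearing there with $r_{m,n}=3$, is therefore off, though this part is easily repaired.
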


\begin{proof} Since $m-n\geq{s\choose 2}-s$, by Definition \ref{de1}, it is easy to see that $r_{m,n}\ge s-1$ and particularly, if $r_{m,n}=s-1$ then $t_{m,n}=s-1$. We shall consider the case that $G^1\cong K_{r_{m,n}+1}$.
Now, $m-n=|E(G^1)|-|V(G^1)|={r_{m,n}+1\choose2}-(r_{m,n}+1)={r_{m,n}-1\choose2}+r_{m,n}-2.$
By Definition \ref{de1}, $t_{m,n}=r_{m,n}.$ So $t_{m,n}\ge s-1$. Since $\delta(G^1)=r_{m,n}\ge s-1$, $G^{s-2}\cong K_{r_{m,n}+1}\cong K_{r_{m,n}}^{r_{m,n}}$, as desired.

Let $u_0$ be a non-cutvertex of $G^1$ with minimal degree.
Let $G^1\in\mathscr{G}_{m',n'}$, $H=G^1-u_0$ and $d_0=d_{G^1}(u_0)$. Then $H\in\mathscr{G}_{m'-d_0,n'-1}.$
The proof is proceeded by induction on $m-n$. Firstly, assume that $m-n={s\choose 2}-s$, then $r_{m,n}=t_{m,n}=s-1$.
If $G^1\cong K_{r_{m,n}+1}$, we are done. Otherwise, by Lemma \ref{33}, $d_0\leq r_{m,n}-1=s-2$ and hence $k_{s-1}(G^1[N(u_0)])=0$. Moreover, since $m'-n'=m-n$ and $d_0\geq2$, $(m'-d_0)-(n'-1)<{s\choose 2}-s.$ By Lemma \ref{ccc}, $k_s(H)=0$. Thus, by (3), $k_s(G)=0<{{r_{m,n}}\choose s}+{{t_{m,n}}\choose {s-1}}$, a contradiction to the maximality of $k_s(G)$.
Now it suffices to consider the following two cases.

\noindent{\bf{Case 1.}} $2\leq d_0\le t_{m,n}-1$.

Recall that $r_{m'-d_0,n'-1}=r_{m,n}$, $t_{m'-d_0,n'-1}=t_{m,n}-d_0+1<t_{m,n}$.
Since $k_s(G)={{r_{m,n}}\choose s}+{{t_{m,n}}\choose {s-1}}$, all the inequalities in (4) are equalities.
By Lemma \ref{aab}, the last inequality of (4) holds in equality if and only if $t_{m,n}\leq s-2$.
So $r_{m,n}\geq s$ and ${{d_0}\choose {s-1}}=0$. Now, the first inequality of (4) holds naturally in equality.
The second inequality of (4) holds in equality if and only if $k_s(H)={{r_{m,n}}\choose s}+{{t_{m,n}-d_0+1}\choose {s-1}}$. Now $k_s(H)>0$ and hence ${s\choose 2}-s\leq(m'-d_0)-(n'-1)<m-n$. Since $t_{m'-d_0,n'-1}<t_{m,n}\leq s-2$, $H$ meets (i). By the induction hypothesis, $H^{s-2}\cong K_{r_{m,n}}$. Since $d_0\leq s-2$ and $G^{s-2}$ does not depend on the order of vertex deletion, we have $G^{s-2}\cong(G^1)^{s-2}\cong H^{s-2}\cong K_{r_{m,n}}$.

\noindent{\bf{Case 2.}} $t_{m,n}\le d_0\le r_{m,n}-1$.

Recall that $r_{m'-d_0,n'-1}=r_{m,n}-1$, $t_{m'-d_0,n'-1}=r_{m,n}+t_{m,n}-d_0-1.$ Now $r_{m,n}\ge s$ (otherwise,
$r_{m,n}=s-1$, then $t_{m,n}=s-1$, which contradicts to $t_{m,n}\le r_{m,n}-1$).
By Lemma \ref{aab}, the last inequality of (6) holds in equality if and only if $r_{m,n}-1\leq s-2$ or
$d_0\in\{t_{m,n},r_{m,n}-1\}$. Since $r_{m,n}\geq s$, we have $d_0\in\{t_{m,n},r_{m,n}-1\}$.

\noindent{\bf{Subcase 2.1.}} $d_0=t_{m,n}$.

Now $t_{m'-d_0,n'-1}=r_{m'-d_0,n'-1}=r_{m,n}-1\geq s-1$. The second inequality of (6) holds in equality if and only if
$k_s(H)={{r_{m,n}-1}\choose s}+{{r_{m,n}-1}\choose {s-1}}$. Now $k_s(H)={{r_{m,n}}\choose s}>0$ and hence ${s\choose 2}-s\leq(m'-d_0)-(n'-1)<m-n$. By the induction hypothesis, $H$ meets (ii), that is, $H^{s-2}\cong K_{r_{m,n}-1}^{r_{m,n}-1}\cong K_{r_{m,n}}$. If $d_0=t_{m,n}\le s-2$, similar to Case 1, we have $G^{s-2}\cong K_{r_{m,n}}$. Next suppose that $d_0=t_{m,n}\geq s-1.$
Clearly, $|E(H^{s-2})|-|V(H^{s-2})|={{r_{m,n}}\choose 2}-r_{m,n}$. And by equality (2), $|E(H)|-|V(H)|={{r_{m,n}-1}\choose 2}-1$. So $|E(H^{s-2})|-|V(H^{s-2})|=|E(H)|-|V(H)|$. By Lemma~\ref{aaa}, $H^1\cong(H^{s-2})^1$ and hence $H^1\cong K_{r_{m,n}}$.
Note that the first inequality of (6) holds in equality if and only if $N(u_0)$ is a $t_{m,n}$-clique.
If $N(u_0)\nsubseteq V(H^1)$, then $t_{m,n}=2$ and hence $s=3$, $r_{m,n}\geq s=3$.
This implies that $G^{s-2}=G^1\in\mathbb{B}(r_{m,n},3)$.
If $N(u_0)\subseteq V(H^1)$, then $G^1\cong K_{r_{m,n}}^{t_{m,n}}$.
And since $\delta(K_{r_{m,n}}^{t_{m,n}})\geq s-1$, $G^{s-2}\cong G^1\cong K_{r_{m,n}}^{t_{m,n}}.$

\noindent{\bf{Subcase 2.2.}} $d_0=r_{m,n}-1\geq t_{m,n}+1$.

Now $r_{m'-d_0,n'-1}=r_{m,n}-1$ and $t_{m'-d_0,n'-1}=t_{m,n}$. Since $d_0=r_{m,n}-1\geq s-1$, the first inequality of (6) holds in equality if and only if $N(u_0)$ is a clique. The second inequality of (6) holds in equality if and only if $k_s(H)={{r_{m,n}-1}\choose s}+{{t_{m,n}}\choose {s-1}}$.

\noindent{\bf{Subcase 2.2.1.}} $t_{m,n}\le s-2$.

Note that $N[u_0]$ is an $r_{m,n}$-clique. Let $H_0=G[N[u_0]].$ Then
$|E({H_0})|-|V({H_0})|={r_{m,n}\choose 2}-r_{m,n}={r_{m,n}-1\choose 2}-1$. Combining with equality (1), we have $[|E(G)|-|V(G)|]-[|E(H_0)|-|V(H_0)|]=t_{m,n}-1\le s-3$.
By Lemma \ref{aaa}, we have $G^{s-2}\cong H_0^{s-2}\cong K_{r_{m,n}}$.

\noindent{\bf{Subcase 2.2.2.}} $t_{m,n}\ge s-1$.

Then $k_s(H)>0$ and hence ${s\choose 2}-s\leq(m'-d_0)-(n'-1)<m-n$. By the induction hypothesis, $H$ meets (ii), that is, $H^{s-2}\cong K_{r_{m,n}-1}^{t_{m,n}}$, or $H^{s-2}\in\mathbb{B}(r_{m,n}-1,3)$ for $t_{m,n}=s-1=2$. In both cases, one can find that $|E(H^{s-2})|-|V(H^{s-2})|={{r_{m,n}-1}\choose 2}+t_{m,n}-r_{m,n}$. And by equality (2), $|E(H)|-|V(H)|={{r_{m,n}-1}\choose 2}+t_{m,n}-r_{m,n}$.
So $|E(H)|-|V(H)|=|E(H^{s-2})|-|V(H^{s-2})|$. By Lemma~\ref{aaa}, $H^1\cong(H^{s-2})^1\cong H^{s-2}$. Since $N(u_0)$ is an $(r_{m,n}-1)$-clique, where $r_{m,n}-1\geq t_{m,n}+1\geq s\geq3$, we have $N(u_0)\subseteq V(H^1)$. This indicates that $G^1\cong K_{r_{m,n}}^{t_{m,n}}$, or $G^1\in \mathbb{B}(r_{m,n},3)$ for $t_{m,n}=s-1=2$ and $r_{m,n}\geq s=3$. Since in both cases $G^{s-2}\cong G^1$, we are done.

Conversely, for any graph $G\in \mathscr{G}_{m,n}$ described in (\romannumeral1) or (\romannumeral2), it is easy to see that $k_s(G)=k_s(G^{s-2})={{r_{m,n}}\choose s}+{{t_{m,n}}\choose {s-1}}$.
This completes the proof.
\end{proof}

\section{Extremal graph on spectral moment}

For a given graph $H$, a subgraph of $G$ isomorphic to $H$ is called an $H$-subgraph of $G$. Denote by $\phi_G(H)$ (or $\phi(H)$) the number of $H$-subgraphs in $G$. In this section, we will determine the last graph in $S$-order over all connected graphs of size $m$ and order $n$.
First, we need to give some basic lemmas.

\begin{lemma}\label{22}( D.Cvetkovi\'{c}, M.Doob and H.Sachs\cite{aa}) For any graph $G$, $ S_4(G)=2\phi(P_2)+4\phi(P_3)+8\phi(C_4)$, where $S_j(G)$ is the number of closed walks of length $j$.
\end{lemma}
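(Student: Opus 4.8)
The plan is to use the identity $S_4(G)=\mathrm{tr}(A(G)^4)$, which, as noted in the Introduction, equals the number of closed walks of length $4$ in $G$; it then suffices to enumerate these walks, sorted according to the subgraph of $G$ that each walk \emph{spans} (the union of the edges it traverses). A closed walk $v_0v_1v_2v_3v_0$ of length $4$ uses four edge-steps and visits at most four vertices, and the spanned subgraph $H$ is connected; since the walk is a closed trail on the multigraph formed by its four edge-instances, $H$ can be spanned only if some choice of multiplicities $\mu(e)\ge 1$ on $E(H)$ with $\sum_e\mu(e)=4$ yields a multigraph with all degrees even. I would first carry out the short case analysis showing that the only possibilities are a single edge ($K_2$), a path $P_3$, and a $4$-cycle $C_4$. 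In particular a triangle can never be spanned, because covering its three edges with four edge-steps forces exactly one edge to be repeated, which produces two vertices of odd degree and hence no closed trail; and any $4$-vertex $H$ would have to be visited at four distinct slots, making the walk a $4$-cycle, so $H=C_4$, which also rules out $P_4$, $K_{1,3}$, and the triangle-with-a-pendant.

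Next I would count, for each fixed copy of the three admissible subgraphs, how many length-$4$ closed walks span exactly it. For a fixed edge $\{u,v\}$ the spanning walks are $uvuvu$ and $vuvuv$, contributing $2\phi(P_2)$. For a fixed $P_3$ with centre $w$ and leaves $u,v$, checking the three choices of starting vertex shows the spanning walks are exactly $uwvwu$, $vwuwv$, $wuwvw$, $wvwuw$, contributing $4\phi(P_3)$. For a fixed $C_4$ with cyclic order $abcd$, the spanning walks are the two orientations of the cycle started at any of its four vertices, contributing $8\phi(C_4)$. Since every closed walk of length $4$ spans exactly one subgraph of $G$, these three families partition all such walks, and summing the contributions gives $S_4(G)=2\phi(P_2)+4\phi(P_3)+8\phi(C_4)$.

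The only delicate point, and thus the main obstacle, is the bookkeeping in the classification step: confirming that the list of spannable subgraphs is exhaustive and that the per-copy multiplicities $2$, $4$, $8$ are exact, with no walk counted twice and none omitted. An equivalent purely algebraic route, which I might prefer to record instead, avoids walk enumeration: writing $\mathrm{tr}(A^4)=\sum_i (A^2)_{ii}^2+\sum_{i\ne j}(A^2)_{ij}^2=\sum_i d_i^2+\sum_{i\ne j}c_{ij}^2$, where $d_i$ is the degree of $i$ and $c_{ij}$ the number of common neighbours of $i$ and $j$, one invokes $\sum_i\binom{d_i}{2}=\phi(P_3)$, $\sum_{i<j}c_{ij}=\phi(P_3)$ and $\sum_{i<j}\binom{c_{ij}}{2}=2\phi(C_4)$ (each $C_4$ being counted once through each of its two pairs of opposite vertices); expanding $d_i^2$ and $c_{ij}^2$ via $x^2=x+2\binom{x}{2}$ then yields the same three terms $2\phi(P_2)+4\phi(P_3)+8\phi(C_4)$.
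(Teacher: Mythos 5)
The paper does not prove this lemma at all; it is quoted verbatim from Cvetkovi\'{c}, Doob and Sachs \cite{aa}, so there is no in-paper argument to compare against. Your proposal supplies a correct, self-contained proof. The walk-classification route is sound: in any closed walk the multigraph of traversed edge-instances has all degrees even, which correctly eliminates the triangle (multiplicities $2,1,1$ force two odd degrees), $P_4$, $K_{1,3}$ and the triangle with a pendant edge, leaving only $K_2$, $P_3$ and $C_4$ as spannable subgraphs; and your per-copy counts $2$, $4$, $8$ are exact. The algebraic route also checks out: $\operatorname{tr}(A^4)=\sum_i d_i^2+2\sum_{i<j}c_{ij}^2$, and with $\sum_i d_i^2=2\phi(P_2)+2\phi(P_3)$, $\sum_{i<j}c_{ij}=\phi(P_3)$ and $\sum_{i<j}\binom{c_{ij}}{2}=2\phi(C_4)$ one recovers $2\phi(P_2)+4\phi(P_3)+8\phi(C_4)$. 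Either version is a legitimate replacement for the bare citation; the combinatorial one matches the spirit in which the paper later uses the formula (tracking how $\phi(P_3)=\sum_i\binom{d_i}{2}$ changes under degree-sequence transformations), while the algebraic one is shorter and avoids the case analysis you flag as the delicate point.
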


A nonincreasing sequence $(d_1,d_2,\ldots,d_n)$ is denoted by $\pi_G$, if it is a degree sequence of a graph $G$.

\begin{lemma}\label{1}
Let $G$ be a connected graph of order $n$ with $\pi_{G^1}=(\bar{d_1},\bar{d_2},\cdots,\bar{d_k})$.
If $n>k$ and a nonincreasing positive sequence $(d_1,d_2,\cdots,d_n)$ satisfies\\
(\romannumeral1) $d_i\ge\bar{d_i}$ for $1\le i\le k$ and $d_{i_0}>\bar{d_{i_0}}$ for some $i_0\in\{1,2,\cdots,k\}$,\\
(\romannumeral2) $\sum_{i=1}^{n}d_i=\sum_{i=1}^{k}\bar{d_i}+2(n-k)$,\\
Then $\pi$ is graphic. Specially, there exists a connected graph $G^*$ such that $G^{*1}\cong G^1$ and $\pi(G)=(d_1,d_2,\cdots,d_n).$
\end{lemma}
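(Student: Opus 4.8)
The plan is to leave the core $G^{1}$ (the unique subgraph of minimum degree at least $2$) completely untouched and to obtain $G^{*}$ from $G^{1}$ by attaching pendant trees; the construction then reduces to the classical description of tree degree sequences. First I would record the arithmetic. Writing $\pi_{G^{1}}=(\bar d_{1},\dots,\bar d_{k})$, set $e_{i}=d_{i}-\bar d_{i}$ for $1\le i\le k$ and $c=\sum_{i=1}^{k}e_{i}$. By (i) every $e_{i}\ge 0$ and $e_{i_{0}}\ge 1$, so $c\ge 1$; and rewriting (ii) gives $c+\sum_{j=k+1}^{n}d_{j}=2(n-k)$, hence $c=2(n-k)-\sum_{j=k+1}^{n}d_{j}\le n-k$ since each $d_{j}\ge 1$. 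Thus $1\le c\le n-k$. Also recall that, $G$ being connected, $G-V(G^{1})$ is a forest each of whose components meets $V(G^{1})$ in exactly one edge --- this is precisely the shape I want $G^{*}-V(G^{1})$ to have.

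The key step is to manufacture the pendant forest, and here is where the classical fact enters. Consider the sequence of $n-k+1$ positive integers $(c,\,d_{k+1},\,\dots,\,d_{n})$; its sum is $c+\sum_{j=k+1}^{n}d_{j}=2(n-k)=2\bigl((n-k+1)-1\bigr)$, so there is a tree $F'$ on a vertex set $\{R,w_{1},\dots,w_{n-k}\}$ with $\deg_{F'}(R)=c$ and $\deg_{F'}(w_{j})=d_{k+j}$ for $1\le j\le n-k$. (This is the standard leaf-deletion induction: it realizes any positive integer sequence of total $2(\ell-1)$, where $\ell$ is its length, as a tree, respecting a prescribed assignment of degrees to vertices.) Now delete $R$: since $\deg_{F'}(R)=c$, the forest $F=F'-R$ has exactly $c$ components, and in each component exactly one vertex, which I call its \emph{stem}, was a neighbour of $R$ in $F'$. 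Finally distribute the $c$ stems among $u_{1},\dots,u_{k}$ so that $u_{i}$ gets exactly $e_{i}$ of them (possible as $\sum e_{i}=c$), join each stem to its assigned core vertex by a new edge, and let $G^{*}=G^{1}\cup F$ together with these $c$ attachment edges.

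It then remains to check three things, all routine. $G^{*}$ is connected because $G^{1}$ is and each component of $F$ is attached to it. For the degrees: $\deg_{G^{*}}(u_{i})=\bar d_{i}+e_{i}=d_{i}$; a non-stem $w_{j}$ has $\deg_{G^{*}}(w_{j})=\deg_{F}(w_{j})=\deg_{F'}(w_{j})=d_{k+j}$, while a stem $w_{j}$ has $\deg_{G^{*}}(w_{j})=\deg_{F}(w_{j})+1=\bigl(\deg_{F'}(w_{j})-1\bigr)+1=d_{k+j}$; so $G^{*}$ has degree multiset $\{d_{1},\dots,d_{n}\}$, which is nonincreasing since $d_{k}\ge d_{k+1}$. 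For the core: $G^{*}-V(G^{1})=F$ is a forest each component of which hangs off $V(G^{1})$ by a single edge, so iteratively deleting vertices of degree at most $1$ strips away all of $F$ --- climbing each hanging tree from its leaves up to its stem --- while every $u_{i}$ keeps degree $\ge\bar d_{i}\ge 2$ throughout; the process therefore halts exactly at $G^{1}$, i.e.\ $(G^{*})^{1}\cong G^{1}$. Since $G^{*}$ is a graph, $\pi$ is graphic.

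The hard part is not really an obstacle but an observation: the data ``a forest with $c$ components of prescribed degrees, one stem per component, with $e_{i}$ stems landing on $u_{i}$'' is the same as ``one tree with a phantom super-root of degree $c$'', which turns an apparently delicate realizability question into the one-line sum condition for tree degree sequences; hypotheses (i) and (ii) are exactly what make $c\ge 1$ and make that sum come out to $2(n-k)$.
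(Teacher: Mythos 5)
Your proof is correct. It shares the paper's skeleton --- fix the core $G^{1}$, realize all the extra degree as a pendant forest hanging off it, and observe that leaf-stripping returns exactly $G^{1}$ --- but the way you build the forest is genuinely different. The paper's construction is more hands-on: it attaches a single path $u_{k+1}\cdots u_{k+s}$ at the one distinguished vertex $u_{i_0}$ (where $s$ counts the entries $d_{k+j}\ge 2$) and then pads every vertex of $G^{1}\cup P_s$ with pendant edges to reach its target degree; the degree excesses $d_i-\bar d_i$ at the core vertices are absorbed entirely by pendant edges, and the arithmetic in (ii) is verified by counting the leftover pendant vertices. You instead aggregate all the excesses into $c=\sum_i(d_i-\bar d_i)$, invoke the classical realization of positive sequences summing to $2(\ell-1)$ as tree degree sequences via a phantom root of degree $c$, and then distribute the resulting $c$ rooted components among the $u_i$ according to $e_i$. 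Your route is slightly less elementary (it imports the tree-degree-sequence theorem) but arguably cleaner conceptually, and it makes transparent exactly where hypotheses (i) and (ii) are used ($c\ge 1$ and the sum condition $2(n-k)$); the paper's version is self-contained and produces a very explicit $G^{*}$, which is all that is needed downstream in Lemmas 3.4 and 3.5. Both correctly verify the two nontrivial points: nonnegativity of the degree deficits being filled, and the fact that $(G^{*})^{1}\cong G^{1}$ because every attached component is a tree meeting $V(G^{1})$ in one edge while each $u_i$ retains degree $\bar d_i\ge 2$.
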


\begin{proof}
Let $s$ be the maximum number in $\{0,1,\ldots,n-k\}$ such that $d_{k+s}\ge2$. Put $d_{G^1}(u_i)=\bar{d_i}$ for each $i\in \{1,2,\cdots,k\}$.
We construct a new graph $G^{*}$ as follows. Let $G'$ be a graph obtained from $G^1$ and a path $P_s=u_{k+1}u_{k+2}\cdots u_{k+s}$ by adding an edge $u_{i_0}u_{k+1}$ for $s\ge 1$, or $G'\cong G^1$ for $s=0$. Let $G^*$ be the graph obtained from $G'$ by adding $d_i-d_{G'}(u_i)$ pendant edges to $u_i$ for each $i\in\{1,2,\cdots,k+s\}$. In this way, we obtain a connected graph $G^{*}$ with $d_{G^*}(u_i)=d_i$ for $1\le i \le k+s$. The number of pendant vertices in $G^*$ is
\begin{eqnarray*}
\sum_{i=1}^{k+s}[d_i-d_{G'}(u_i)]
&=& \sum_{i=1}^{k+s}{d_i}-\sum_{i=1}^{k+s}{d_{G'}(u_i)}\\
&=& [\sum_{i=1}^{n}{d_i}-(n-k-s)]-[\sum_{i=1}^{k}\bar{d_i}+2s]\\
&=& n-k-s,
\end{eqnarray*}
since $\sum_{i=1}^{n}d_i=\sum_{i=1}^{k}\bar{d_i}+2(n-k)$. So $|V(G^*)|=n$. According to the definition, it is easy to see that $G^{*1}\cong G^1$ and $\pi(G)=(d_1,d_2,\cdots,d_n)$.
\end{proof}

\begin{lemma}\label{0}
Let $(\bar{d_1},\bar{d_2},\cdots,\bar{d_k})$ be a nonincreasing sequence. Let $n\ge k$ and $\pi'=(d_1',d_2',\cdots,d_n')$ be a sequence with $d_i'\ge\bar{d_i}$ for $1\leq i\leq k$. Reorder $\pi'$ in a nonincreasing order $\pi=(d_1,d_2,\cdots,d_n)$. Then we also have $d_i\ge \bar{d_i}$ for $1\leq i\leq k$.
\end{lemma}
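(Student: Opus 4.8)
The plan is to argue by contradiction, exploiting the fact that reordering can only redistribute values and never ``lose'' large entries. Suppose there exists an index $j$ with $1\le j\le k$ such that $d_j<\bar{d_j}$. Since $\pi=(d_1,\dots,d_n)$ is nonincreasing, this forces $d_i\le d_j<\bar{d_j}$ for every $i\ge j$. In particular, among the $n-j+1$ entries $d_j,d_{j+1},\dots,d_n$, each is strictly smaller than $\bar{d_j}$; equivalently, at most $j-1$ entries of $\pi$ are $\ge\bar{d_j}$.

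On the other hand, I will count how many entries of $\pi'$ (hence of $\pi$, since $\pi$ is just $\pi'$ reordered) must be at least $\bar{d_j}$. By hypothesis $d_i'\ge\bar{d_i}$ for $1\le i\le k$, and since $(\bar{d_1},\dots,\bar{d_k})$ is nonincreasing we have $\bar{d_i}\ge\bar{d_j}$ for all $i\le j$. Therefore each of $d_1',d_2',\dots,d_j'$ satisfies $d_i'\ge\bar{d_i}\ge\bar{d_j}$, giving at least $j$ entries of $\pi'$ that are $\ge\bar{d_j}$. Since $\pi$ contains exactly the same multiset of values as $\pi'$, $\pi$ also has at least $j$ entries $\ge\bar{d_j}$. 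This contradicts the conclusion of the previous paragraph that at most $j-1$ entries of $\pi$ are $\ge\bar{d_j}$. Hence no such $j$ exists, and $d_i\ge\bar{d_i}$ for all $1\le i\le k$.

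This argument is essentially a pigeonhole/counting statement about multisets, so there is no real obstacle; the only point requiring a little care is the direction of the inequalities, namely that the threshold must be taken as $\bar{d_j}$ (the value that fails) and that monotonicity of the barred sequence upgrades $d_i'\ge\bar{d_i}$ to $d_i'\ge\bar{d_j}$ for $i\le j$. I would write the proof in exactly the two steps above, keeping the threshold $\bar{d_j}$ fixed throughout.
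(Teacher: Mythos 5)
Your proof is correct and is essentially the same counting argument as the paper's: both observe that $d_1',\dots,d_j'$ are all at least $\bar{d_j}$, so $\pi'$ (hence $\pi$) has at least $j$ entries $\ge\bar{d_j}$, forcing the $j$-th largest entry $d_j$ to be $\ge\bar{d_j}$. The only difference is that you phrase it as a contradiction while the paper argues directly; the substance is identical.
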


\begin{proof} Given any $i\in\{1,2,\ldots,k\}$. For each $t\in \{1,2,\cdots,i\}$, we have $d_t'\ge \bar{d_t}\ge \bar{d_i}$.
This implies that there are at least $i$ elements not less than $\bar{d_i}$ in $\pi'$.
Since $d_i$ is the $i$-th largest element in $\pi$ (also in $\pi'$), we have $d_i\ge \bar{d_i}$.
\end{proof}

\begin{lemma}\label{00}
Let $G\in\mathscr{G}_{m,n}$ with $\pi(G)=(d_1,d_2,\cdots,d_{n})$ and
$\pi_{G^1}=(\bar{d_1},\bar{d_2},\cdots,\bar{d_k})$. If $n>k$ and $d_{i_0}>\bar{d}_{i_0}$ for some $i_0\in\{2,3,\ldots,k\}$, then there exists a graph $G^*\in\mathscr{G}_{m,n}$ such that ${G^*}^1\cong G^1$ and $S_4(G^*)>S_4(G)$.
\end{lemma}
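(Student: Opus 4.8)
The plan is to build $G^{*}$ from $G$ by one \emph{degree shift} inside the $1$-core, move one unit of degree from the vertex of $i_{0}$-th largest degree up to the vertex of largest degree, realise the resulting degree sequence with the same $1$-core via Lemma~\ref{1}, and deduce $S_{4}(G^{*})>S_{4}(G)$ from Lemma~\ref{22}. The first step is to reduce the inequality on $S_{4}$ to one on $\phi(P_{3})$. By Lemma~\ref{22}, $S_{4}(H)=2\phi_{H}(P_{2})+4\phi_{H}(P_{3})+8\phi_{H}(C_{4})$ for every graph $H$, with $\phi_{H}(P_{2})=|E(H)|$ and $\phi_{H}(P_{3})=\sum_{i}\binom{d_{i}}{2}$ where $(d_{1},\dots)=\pi(H)$. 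Since no cycle of $H$ uses a cut edge, every $4$-cycle of $H$ lies in the induced subgraph $H^{1}$ and conversely, so $\phi_{H}(C_{4})=\phi_{H^{1}}(C_{4})$. Hence, if $G^{*}\in\mathscr{G}_{m,n}$ satisfies ${G^{*}}^{1}\cong G^{1}$, then $\phi_{G^{*}}(P_{2})=m=\phi_{G}(P_{2})$ and $\phi_{G^{*}}(C_{4})=\phi_{G^{1}}(C_{4})=\phi_{G}(C_{4})$, so $S_{4}(G^{*})-S_{4}(G)=4\bigl(\phi_{G^{*}}(P_{3})-\phi_{G}(P_{3})\bigr)$; it therefore suffices to produce $G^{*}\in\mathscr{G}_{m,n}$ with ${G^{*}}^{1}\cong G^{1}$ and $\phi_{G^{*}}(P_{3})>\phi_{G}(P_{3})$.

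Next I would write down the target degree sequence. Put $\pi(G)=(d_{1},\dots,d_{n})$ and note $k\ge i_{0}\ge 2$, so $G^{1}\neq\emptyset$; since each vertex deleted in forming $G^{1}$ from the connected graph $G$ is a leaf at the moment of its deletion, $|E(G)|=|E(G^{1})|+(n-k)$, i.e. $\sum_{i}d_{i}=2m=\sum_{i=1}^{k}\bar{d_i}+2(n-k)$. Also $d_{i}\ge\bar{d_i}$ for $1\le i\le k$ by Lemma~\ref{0}, while $d_{i_{0}}\ge\bar{d_{i_0}}+1$ by hypothesis. Let $\pi'$ be $\pi(G)$ with $d_{1}$ replaced by $d_{1}+1$ and $d_{i_{0}}$ replaced by $d_{i_{0}}-1$, and let $\pi^{*}=(d_{1}^{*},\dots,d_{n}^{*})$ be $\pi'$ reordered nonincreasingly. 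The $i$-th entry of $\pi'$ is $\ge\bar{d_i}$ for each $i\le k$ (clear for $i=1$; it is $d_{i_{0}}-1\ge\bar{d_{i_0}}$ for $i=i_{0}$; and it is $d_{i}\ge\bar{d_i}$ otherwise), so $d_{i}^{*}\ge\bar{d_i}$ for $1\le i\le k$ by Lemma~\ref{0}; moreover $\pi^{*}$ is positive because $d_{i_{0}}-1\ge\bar{d_{i_0}}\ge\bar{d_k}\ge 2$, and $d_{1}^{*}=d_{1}+1>\bar{d_1}$ gives the required strict entry. Since $n>k$ and $\sum_{i}d_{i}^{*}=\sum_{i=1}^{k}\bar{d_i}+2(n-k)$, Lemma~\ref{1} produces a connected graph $G^{*}$ with ${G^{*}}^{1}\cong G^{1}$ and $\pi(G^{*})=\pi^{*}$; then $|V(G^{*})|=n$ and $|E(G^{*})|=\tfrac12\sum_{i}d_{i}^{*}=m$, whence $G^{*}\in\mathscr{G}_{m,n}$.

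It remains to compute the gain. Using $\binom{a+1}{2}-\binom{a}{2}=a$,
\begin{align*}
\phi_{G^{*}}(P_{3})-\phi_{G}(P_{3})
&=\sum_{i=1}^{n}\binom{d_{i}^{*}}{2}-\sum_{i=1}^{n}\binom{d_{i}}{2}\\
&=\Bigl(\binom{d_{1}+1}{2}-\binom{d_{1}}{2}\Bigr)+\Bigl(\binom{d_{i_{0}}-1}{2}-\binom{d_{i_{0}}}{2}\Bigr)
=d_{1}-d_{i_{0}}+1\ \ge\ 1,
\end{align*}
since $d_{1}\ge d_{i_{0}}$ as $i_{0}\ge 2$ and $\pi(G)$ is nonincreasing. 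Combined with the reduction in the first paragraph this gives $S_{4}(G^{*})=S_{4}(G)+4\bigl(\phi_{G^{*}}(P_{3})-\phi_{G}(P_{3})\bigr)>S_{4}(G)$, as desired. I expect the main obstacle to be the verification in the second paragraph that the shifted sequence is still realisable with the prescribed $1$-core; this rests on the degree-sum identity for $G^{1}$, on Lemma~\ref{0} (re-sorting cannot destroy the domination $d_{i}^{*}\ge\bar{d_i}$), and on the exact hypothesis $d_{i_{0}}>\bar{d_{i_0}}$ with $i_{0}\ge 2$, which is precisely what lets us lower position $i_{0}$ by one while still forcing a strict increase at the top.
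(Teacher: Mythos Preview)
Your proof is correct and follows essentially the same route as the paper: you define the shifted sequence $\pi'$ by moving one unit of degree from position $i_{0}$ to position $1$, invoke Lemma~\ref{0} to preserve the domination $d_i^{*}\ge\bar d_i$ after re-sorting, apply Lemma~\ref{1} to realise $\pi^{*}$ with the same $1$-core, and then compare $S_4$ via Lemma~\ref{22} using $\phi(P_3)=\sum_i\binom{d_i}{2}$. You supply a few details the paper leaves implicit (why $\phi_{G}(C_4)=\phi_{G^1}(C_4)$, positivity of $\pi^{*}$, and why $d_1^{*}=d_1+1$ gives the strict entry), but the argument is the same.
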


\begin{proof} Note that $G^1$ is an induced subgraph of $G$. Although the vertex of degree $\bar{d_i}$ in $G^1$ may not be the vertex of degree $d_i$ in $G$, by Lemma~\ref{0}, we still have $d_i\ge \bar{d_i}$ for $1\leq i\leq k$.

Now we define a sequence $\pi'=(d_1',d_2',\cdots,d_{n}')$, where $d_{1}'=d_{1}+1$, $d_{i_0}'=d_{i_0}-1$ and $d_i'=d_i$ for each $i\notin\{1,i_0\}$. Clearly, $d_i'\geq \bar{d_i}$ for $1\leq i\leq k$. Reorder $\pi'$ in a nonincreasing order $\pi''=(d_1'',d_2'',\cdots,d_{n}'')$. By Lemma~\ref{0}, $d_1''=d_{1}'>d_1$ and $d_i''\ge \bar{d_i}$ for $2\leq i\leq k$.
Moreover, since $G^1$ is obtained from $G$ by iteratively deleting $n-k$ pendant edges, we have $\sum_{i=1}^nd_i''=\sum_{i=1}^nd_i=\sum_{i=1}^k\bar{d_i}+2(n-k).$
By Lemma~\ref{1}, there exists a connected graph $G^*$ such that ${G^*}^1\cong G^1$ and $\pi(G^*)=\pi''$. This indicates that $G^*\in\mathscr{G}_{m,n}$.

Furthermore, it is clear that $\phi_{G^*}(P_2)=\phi_{G}(P_2)$ and $\phi_{G^*}(C_4)=\phi_{G}(C_4)$. Note that $\phi_{G}(P_3)=\sum_{i=1}^n{d_i\choose 2}$. So $\phi_{G^*}(P_3)-\phi_{G}(P_3)=d_{1}-d_{i_0}+1>0$, since $1<i_0$. By Lemma \ref{22}, we have $S_4(G^*)>S_4(G)$.
\end{proof}

For convenience, we call $\pi''$, in the proof of Lemma \ref{00}, a $d_{i_0}$-\emph {transformation} of $\pi$. Correspondingly, we call $G^*$ a $d_{i_0}$-\emph {transformation} of $G$. We also write $\varsigma (H,n)$ for the set of connected graphs $G$ of order $n$ with $G^1\cong H$. Since $G^1$ is obtained from $G$ by iteratively deleting $n-k$ pendant edges, one can see that all graphs in $\varsigma (H,n)$ have the same size $|E(H)|+(n-k)$.

\begin{lemma}\label{12}
Let $n>k$ and $H$ be a connected graph with $\pi_{H}=(\bar{d_1},\bar{d_2},\cdots,\bar{d_k})$, where $\bar{d_k}>1$. Then $G$ attains the largest value of $S_4$ over all graphs in $\varsigma(H,n)$ if and only if $\pi_{G}=(d_1,d_2,\cdots,d_{n})$, where $d_1=\bar{d_1}+n-k$, $d_i=\bar{d_i}$ for $2\leq i\leq k$ and $d_i=1$ for $i>k$.
\end{lemma}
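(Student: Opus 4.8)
The plan is to prove Lemma~\ref{12} by combining the $d_{i_0}$-transformation machinery of Lemma~\ref{00} with an explicit construction and a direct comparison of $\phi(P_3)$ values via Lemma~\ref{22}. First I would establish the ``only if'' direction. Suppose $G$ attains the largest $S_4$ over $\varsigma(H,n)$. Since $\bar{d_k}>1$, every vertex of $H$ has degree at least $2$, so $G$ is obtained from $H$ by repeatedly attaching pendant edges and $G^1\cong H$ with $\pi_{G^1}=(\bar d_1,\dots,\bar d_k)$. By Lemma~\ref{0}, writing $\pi(G)=(d_1,\dots,d_n)$ in nonincreasing order we have $d_i\ge\bar d_i$ for $1\le i\le k$. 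If $d_{i_0}>\bar d_{i_0}$ for some $i_0\in\{2,\dots,k\}$, then Lemma~\ref{00} produces a $d_{i_0}$-transformation $G^*\in\varsigma(H,n)$ with $S_4(G^*)>S_4(G)$, contradicting maximality. Hence $d_i=\bar d_i$ for $2\le i\le k$. Also $d_i=1$ for all $i>k$: any vertex not surviving in $G^1=H$ is eventually a pendant vertex, and since all such deletions are of degree-$1$ vertices and $|E(G)|=|E(H)|+(n-k)$ is fixed, the $n-k$ extra edges are pendant edges; more carefully, $\sum_{i>k} d_i = \sum_{i=1}^n d_i - \sum_{i=1}^k \bar d_i - (d_1-\bar d_1)$, and combined with $\sum_{i=1}^n d_i = \sum_{i=1}^k \bar d_i + 2(n-k)$ this forces $d_1-\bar d_1 + \sum_{i>k}d_i = 2(n-k)$ with each $d_i\ge 1$ for $i>k$; I then argue that if some $d_i\ge 2$ for $i>k$ one can again apply a transformation pushing that degree down and $d_1$ up, strictly increasing $\phi(P_3)$. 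So $d_i=1$ for $i>k$, which by the degree-sum identity forces $d_1=\bar d_1+(n-k)$.

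For the ``if'' direction I would first check that the prescribed sequence $(d_1,\dots,d_n)$ with $d_1=\bar d_1+n-k$, $d_i=\bar d_i$ ($2\le i\le k$), $d_i=1$ ($i>k$) is realized by some $G\in\varsigma(H,n)$: this is exactly the construction in Lemma~\ref{1} (or simply attach all $n-k$ pendant edges to a fixed maximum-degree vertex $u_1$ of $H$), and one checks $G^1\cong H$ since iteratively stripping pendant vertices recovers $H$. Then, for any other $G'\in\varsigma(H,n)$ with $\pi(G')=(d_1',\dots,d_n')$, Lemma~\ref{22} gives $S_4 = 2\phi(P_2)+4\phi(P_3)+8\phi(C_4)$ where $\phi(P_2)=m$ is constant on $\varsigma(H,n)$; moreover $\phi(C_4)$ depends only on $H$ because all cycles of $G'$ lie inside $G'^1\cong H$ (pendant edges create no $C_4$), so $\phi_{G'}(C_4)=\phi_H(C_4)$. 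Hence maximizing $S_4$ is equivalent to maximizing $\phi(P_3)=\sum_i\binom{d_i'}{2}$ subject to $d_i'\ge\bar d_i$ for $i\le k$, $d_i'\ge 1$ for $i>k$, and $\sum d_i' = \sum\bar d_i + 2(n-k)$. By convexity of $\binom{x}{2}$ (equivalently by repeated use of the elementary inequality $\binom{a}{2}+\binom{b}{2}\le\binom{a+1}{2}+\binom{b-1}{2}$ for $a\ge b\ge 1$), this sum is maximized by pushing all the ``free'' degree mass onto a single coordinate with the largest lower bound, i.e.\ onto $d_1'$, giving exactly the claimed sequence; and the comparison is strict unless $\pi(G')$ already equals it, so $G$ is the unique (up to the degree sequence) maximizer.

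The main obstacle I anticipate is bookkeeping the interplay between the \emph{degree sequence} of $G$ and the degree sequence of $G^1$: the vertex realizing $\bar d_i$ in $H$ need not be the vertex realizing $d_i$ in $G$, so one must be careful that the transformation in Lemma~\ref{00} can indeed be applied whenever $d_{i_0}>\bar d_{i_0}$ and that after the transformation the new graph still lies in $\varsigma(H,n)$ (this is precisely what Lemma~\ref{00} guarantees, via Lemma~\ref{1}). A secondary subtlety is justifying $\phi_{G'}(C_4)=\phi_H(C_4)$ and $\phi_{G'}(P_2)=m$ uniformly over $\varsigma(H,n)$, which follows since appending pendant edges changes neither the edge count beyond the fixed $n-k$ nor the set of $4$-cycles; once these invariances are in hand, the problem collapses to the clean optimization of $\sum\binom{d_i}{2}$ and the convexity argument finishes it.
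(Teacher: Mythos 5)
Your argument is correct, and its skeleton coincides with the paper's: both directions rest on Lemma \ref{22}, the observation that $\phi(P_2)$ and $\phi(C_4)$ are invariant on $\varsigma(H,n)$, Lemma \ref{0} to get $d_i\ge\bar d_i$, and Lemma \ref{00} to kill the case $d_{i_0}>\bar d_{i_0}$ with $2\le i_0\le k$. Where you diverge is in how the remaining cases are closed. The paper stays entirely with local graph surgery: for a vertex $u$ outside $G^1$ with $d(u)\ge 2$ it performs the explicit switch $G^*=G-uv+uw$ (with $v$ the neighbour of $u$ nearest $G^1$ and $w$ of maximum degree), checks ${G^*}^1\cong G^1$, and computes the gain in $\phi(P_3)$; its converse then only needs that all graphs with the prescribed degree sequence share the same $S_4$. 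You instead reduce everything to maximizing $\sum_i\binom{d_i}{2}$ over the feasible integer sequences $\{d_i\ge\bar d_i\ (i\le k),\ d_i\ge 1\ (i>k),\ \sum d_i=\sum\bar d_i+2(n-k)\}$ and invoke strict convexity, having checked that the optimal sequence is realizable. This buys you two things: the uniqueness of the extremal degree sequence falls out in one stroke (so the vaguely stated transformation for $d_{i_0}\ge 2$, $i_0>k$, is not actually needed), and you sidestep the small gap in the paper's switch (which silently assumes $v\ne w$). The price is that you must justify that the feasible set really contains every realizable degree sequence (Lemma \ref{0} does this) and that $\phi(C_4)$ depends only on $H$, i.e., that every component of $G-V(G^1)$ is a tree attached to $G^1$ by a single edge — a point you flag and which is easy to verify. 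Both routes are sound; yours is slightly cleaner on the converse.
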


\begin{proof} Let $G\in\varsigma(H,n)$ be the extremal graph and $\pi_{G}=(d_1,d_2,\cdots,d_{n})$. Since $H$ is an induced subgraph of $G$, by Lemma \ref{0}, $d_i\geq\bar{d_i}$ for $1\leq i\leq k$. Suppose that $d_{i_0}>\bar{d_{i_0}}$ for some $i_0\in\{2,3,\dots,k\}$. Let $G^*$ be a $d_{i_0}$-transformation of $G$. Then by Lemma \ref{00}, ${G^*}^1\cong G^1\cong H$ and $S_4(G^*)>S_4(G)$, a contradiction.
Now suppose that $d_{i_0}>1$ for some $i_0>k$,
say, $u\in V(G)$ corresponding to $d_{i_0}$ and $v\in N_G(u)$ with minimal distance to $G^1$. We define $G^*=G-uv+uw$, where $w\in V(G)$ corresponding to $d_1$. Clearly, ${G^*}^1\cong G^1$ and by Lemma \ref{22},
$$S_4(G^*)-S_4(G)=4[\phi_{G^*}(P_3)-\phi_{G}(P_3)]=4(d_1-d_{i_0}+1)>0,$$ also a contradiction. So $d_i=\bar{d_i}$ for $2\leq i\leq k$ and $d_i=1$ for $k+1\leq i\leq n$. Recall that $\sum_{i=1}^nd_i=\sum_{i=1}^k\bar{d_i}+2(n-k).$ Thus $d_1=\bar{d_1}+n-k$.

Conversely, assume that $G',G''\in\varsigma(H,n)$ with $\pi_{G'}=\pi_{G''}=(d_1,d_2,\cdots,d_{n})$. Since ${G'}^1\cong{G''}^1\cong H$ and $\pi_{G'}=\pi_{G''}$, we have $\phi_{G'}(P_2)=\phi_{G''}(P_2)$ and $\phi_{G'}(C_4)=\phi_{G''}(C_4)$. Moreover, note that $\phi_{G'}(P_3)=\phi_{G''}(P_3)=\sum_{i=1}^n{d_i\choose 2}$. By Lemma \ref{22}, $S_4(G')=S_4(G'')$.
\end{proof}

Now we try to characterize the last graph in $S$-order over all graphs in $\mathscr{G}_{m,n}$.

\begin{theorem}\label{105} Let $G^{*}$ be the last graph in $S$-order over all graphs in $\mathscr{G}_{m,n}$. Then $G^{*}$ is obtained from $K_{r_{m,n}}^{t_{m,n}}$ by adding ${n-r_{m,n}-1}$ pendant edges to a vertex of degree $r_{m,n}$.
\end{theorem}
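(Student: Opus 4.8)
The plan is to exploit the fact that $S_0(G)=n$, $S_1(G)=0$ and $S_2(G)=2m$ are the same for every $G\in\mathscr{G}_{m,n}$, while $S_3(G)=6k_3(G)$. Hence the last graph $G^{*}$ in $S$-order must first be a graph in $\mathscr{G}_{m,n}$ maximizing $k_3$, and among all such graphs it must then maximize $S_4$. The first requirement is resolved by Theorems \ref{111} and \ref{222} with $s=3$, and the second by Lemma \ref{12}.

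First I would dispose of the degenerate case $m=n-1$, in which $\mathscr{G}_{m,n}$ is the set of all trees of order $n$: then $S_3\equiv 0$ and, by Lemma \ref{22}, $S_4(G)=2(n-1)+4\sum_i{d_i\choose 2}$, which is uniquely maximized by the star $K_{1,n-1}$ (see \cite{bab}); since here $r_{m,n}=t_{m,n}=1$, this star is exactly the graph described in the statement. So from now on assume $m\ge n$. By Definition \ref{de1} this gives $2\le t_{m,n}\le r_{m,n}$, hence $t_{m,n}\ge s-1$ for $s=3$, so Theorem \ref{222}(ii) applies: a graph $G\in\mathscr{G}_{m,n}$ maximizes $k_3(G)$ if and only if $G^{1}\cong K_{r_{m,n}}^{t_{m,n}}$, or (only in the special case $t_{m,n}=2$, $r_{m,n}\ge 3$) $G^{1}\in\mathbb{B}(r_{m,n},3)$. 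In every case $\delta(G^{1})\ge 2$, because $\delta(K_{r_{m,n}}^{t_{m,n}})=\min\{t_{m,n},\,r_{m,n}-1\}\ge 2$ and $\delta(B)=2$ for each $B\in\mathbb{B}(r_{m,n},3)$ with $r_{m,n}\ge 3$. Thus every $k_3$-maximizer lies in a class $\varsigma(H,n)$ with $H\in\{K_{r_{m,n}}^{t_{m,n}}\}\cup\mathbb{B}(r_{m,n},3)$, to which Lemma \ref{12} applies (unless the class is a singleton $n=|V(H)|$, handled directly).

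Next I would run Lemma \ref{12} on each such class and compare the resulting $S_4$ values. For $H=K_{r_{m,n}}^{t_{m,n}}$ we have $|V(H)|=r_{m,n}+1$ and largest degree $r_{m,n}$, so Lemma \ref{12} says the $S_4$-maximal member of $\varsigma(H,n)$ has degree sequence $(n-1,\,r_{m,n},\,\bar{d_3},\dots,\bar{d_{r_{m,n}+1}},\,1,\dots,1)$; since the $t_{m,n}$ vertices of degree $r_{m,n}$ in $K_{r_{m,n}}^{t_{m,n}}$ are mutually similar, this member is, up to isomorphism, precisely the graph obtained from $K_{r_{m,n}}^{t_{m,n}}$ by attaching $n-r_{m,n}-1$ pendant edges at a vertex of degree $r_{m,n}$, i.e.\ the graph $G^{*}$ in the statement. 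It then remains to show, when $t_{m,n}=2$ and $r_{m,n}\ge 3$, that $S_4(G^{*})$ strictly exceeds the maximum of $S_4$ over $\varsigma(B,n)$ for every $B\in\mathbb{B}(r_{m,n},3)$. All graphs involved have the same size $m$, so by Lemma \ref{22} it is enough to compare $\phi(P_3)+2\phi(C_4)$, where $\phi(P_3)=\sum_i{d_i\choose 2}$. One checks that every graph of $\varsigma(B,n)$ has $\phi(C_4)=3{r_{m,n}\choose 4}$ (all $4$-cycles sit inside the $K_{r_{m,n}}$), whereas $\phi(C_4)(G^{*})=3{r_{m,n}\choose 4}+(r_{m,n}-2)$; feeding the explicit Lemma \ref{12} degree sequences into $\sum_i{d_i\choose 2}$ shows that for the strongest competitor $B=B(r_{m,n},3)$ (connecting path of length $0$) the value of $\phi(P_3)+2\phi(C_4)$ is larger for $G^{*}$ by exactly $3(r_{m,n}-2)>0$, and for a $B$ with a longer connecting path the gap only widens, because then the top degree of the $\varsigma(B,n)$-extremizer drops strictly below $n-1$. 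Combining, $S_4(G)<S_4(G^{*})$ for every $k_3$-maximizer $G\not\cong G^{*}$, while $S_3(G)<S_3(G^{*})$ for every $G$ that is not a $k_3$-maximizer, so $G^{*}$ is the unique last graph in $S$-order.

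The hard part is this final $S_4$-comparison: one must control $\sum_i{d_i\choose 2}$ and $\phi(C_4)$ simultaneously for the Lemma \ref{12}-extremizer of $\varsigma(K_{r_{m,n}}^{t_{m,n}},n)$ and of every class $\varsigma(B,n)$ with $B\in\mathbb{B}(r_{m,n},3)$, and verify that the margin stays strictly positive for all $r_{m,n}\ge 3$, all admissible $n$, and every length of the connecting path. A minor additional point is the identification of the abstract Lemma \ref{12} degree sequence with the single graph $G^{*}$ in the statement, which rests on the vertex-transitivity of the set of maximum-degree vertices of $K_{r_{m,n}}^{t_{m,n}}$.
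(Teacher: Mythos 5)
Your proposal is correct and follows essentially the same route as the paper: fix $S_0,S_1,S_2$, reduce via $S_3=6k_3$ to the triangle-maximizers classified by Theorem \ref{222} with $s=3$, and then maximize $S_4$ within each admissible class $\varsigma(H,n)$ by Lemma \ref{12}, finishing with a direct $S_4$-comparison in the case $t_{m,n}=2$. The only divergences are minor: you treat the tree case $m=n-1$ explicitly (which the paper silently omits), and you dispose of the members of $\mathbb{B}(r_{m,n},3)$ with a connecting path of positive length by comparing the Lemma \ref{12} degree sequences directly, whereas the paper contracts a cut edge of $G^1$ and re-attaches a pendant edge to increase $S_4$; both reduce to the same $\phi(P_3)$ and $\phi(C_4)$ counts.
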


\begin{proof} It is known that $\sum_{i=1}^n\lambda_i(G)=0$,
$\sum_{i=1}^n\lambda_i^2(G)=2|E(G)|$ and $\sum_{i=1}^n\lambda_i^3(G)=6k_3(G)$ for a graph $G$
(see \cite{aa}). So $S_1(G)=0$, $S_2(G)=2m$ and $S_3(G)=6k_3(G)$ for any graph $G\in\mathscr{G}_{m,n}$. And since $G^*$ is the last graph in $S$-order, $G^*$ has maximal number of triangles over all graphs in $\mathscr{G}_{m,n}$.

If $t_{m,n}\geq 3$, then by Theorem~\ref{222} (ii), ${G^*}^1\cong  K_{r_{m,n}}^{t_{m,n}}$, that is, $G^*\in\varsigma(K_{r_{m,n}}^{t_{m,n}},n)\subseteq \mathscr{G}_{m,n}$. Since $S_3(G)=S_3(K_{r_{m,n}}^{t_{m,n}})$ for any graph $G\in\varsigma(K_{r_{m,n}}^{t_{m,n}},n)$, $G^*$ must attain the largest value of $S_4$ over all graphs in $\varsigma(K_{r_{m,n}}^{t_{m,n}},n)$.
By Lemma \ref{12}, one can find that the statement holds.

If $t_{m,n}=2$, then by Theorem~\ref{222} (ii), $G^{*1}\cong  K_{r_{m,n}}^{2}$, or $G^{*1}\in \mathbb{B}(r_{m,n},3)$.
For any graph $G$ with $G^1 \in \mathbb{B}(r_{m,n},3)$,
if there is a cut edge $uv$ of $G^1$, let $G'$ be the graph obtained from $G-uv$ by identifying $u$ with $v$ and adding a new pendant edge $uw$. Note that $S_3(G)=S_3(G')$ and $S_4(G)<S_4(G')$. This
implies that if $G^{*1}\in \mathbb{B}(r_{m,n},3)$ then $G^{*1}\cong B(r_{m,n},3)$. Furthermore, by Lemma~\ref{12}, $ G^{*}\cong B_1 $ if $G^{*1}\cong K_{r_{m,n}}^{2}$ or $ G^{*}\cong B_2 $ if $G^{*1}\cong B(r_{m,n},3)$ (see Fig.~\ref{bbc}). Clearly, $S_3(B_1)=S_3(B_2)$. And by Lemma~\ref{22} and direct computations, we have $S_4(B_2)<S_4(B_1)$. Hence $G^* \cong B_1$, the statement also holds. This completes the proof.
\end{proof}

\begin{figure}[!ht]
	\centering
	\includegraphics[width=0.5\textwidth]{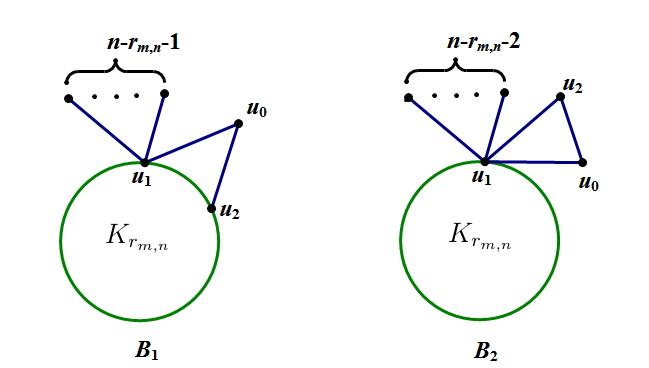}
	\caption{Extremal graphs $B_1$ and $B_2$ }{\label{bbc}}
\end{figure}


\small {

}
\end{document}